\newtheorem{theorem}{Theorem}[section]
\newtheorem{lemma}[theorem]{Lemma}
\newtheorem{corollary}[theorem]{Corollary}
\theoremstyle{definition}
\theoremstyle{remark}
\newtheorem{remark}[theorem]{Remark}
\numberwithin{equation}{section}
\begin{document}

\setcounter{page}{1}

\title[Bounds for the $\mathbb{A}$-numerical radius of $2\times 2$ block matrices]
{Some upper bounds for the $\mathbb{A}$-numerical radius of $2\times 2$ block matrices}

\author[Q. Xu, Z. Ye \MakeLowercase{and} A. Zamani]
{Qingxiang Xu$^1$, Zhongming Ye$^1$ \MakeLowercase{and} Ali Zamani$^{2,*}$}

\address{$^1$Department of Mathematics, Shanghai Normal University, Shanghai 200234, P.R. China}
\email{qingxiang\_xu@126.com}

\address{$^1$Department of Mathematics, Shanghai Normal University, Shanghai 200234, P.R. China}
\email{zhongming\_ye@139.com}

\address{$^*$ Corresponding author, $^2$Department of Mathematics, Farhangian University, Tehran, Iran}
\email{zamani.ali85@yahoo.com}

\subjclass[2010]{Primary 47A05; Secondary 46C05, 47B65, 47A12.}

\keywords{Positive operator, operator matrix, semi-inner product, $A$-numerical radius, inequality.}

\begin{abstract}
Let $\mathbb{A}=\left(
\begin{array}{cc}
A & 0 \\
0 & A \\
\end{array}
\right)$ be the $2\times2$ diagonal operator matrix determined by a positive bounded operator $A$.
For semi-Hilbertian operators $X$ and $Y$, we first show that
\begingroup\makeatletter\def\f@size{6}\check@mathfonts
\begin{align*}
w^2_{\mathbb{A}}\left(\begin{bmatrix}
0 & X \\
Y & 0
\end{bmatrix}\right) &\leq \frac{1}{4}\max\Big\{{\big\|XX^{\sharp_A} + Y^{\sharp_A}Y\big\|}_{A}, {\big\|X^{\sharp_A}X + YY^{\sharp_A}\big\|}_{A}\Big\}
+ \frac{1}{2}\max\big\{w_{A}(XY), w_{A}(YX)\big\},
\end{align*}
\endgroup
where $w_{\mathbb{A}}(\cdot)$, ${\|\cdot\|}_{A}$ and $w_{A}(\cdot)$
are the $\mathbb{A}$-numerical radius, $A$-operator seminorm and $A$-numerical radius, respectively.
We then apply the above inequality to find some upper bounds for the $\mathbb{A}$-numerical
radius of certain $2\times 2$ operator matrices.
In particular, we obtain some refinements of earlier $A$-numerical radius inequalities
for semi-Hilbertian operators.
An upper bound for the $\mathbb{A}$-numerical radius of $2\times 2$
block matrices of semi-Hilbertian space operators is also given.

\end{abstract} \maketitle
\section{Introduction}
Let $\big(\mathcal{H}, \langle\cdot, \cdot\rangle\big)$ be a complex Hilbert space equipped with the norm $\|\cdot\|$
and let $I$ stand for the identity operator on $\mathcal{H}$.
If $\mathcal{M}$ is a linear subspace of $\mathcal{H}$, then $\overline{\mathcal{M}}$ stands for its closure in the
norm topology of $\mathcal{H}$. We denote the orthogonal projection onto a closed linear subspace
$\mathcal{M}$ of $\mathcal{H}$ by $P_{\mathcal{M}}$.
Let $\mathbb{B}(\mathcal{H})$ be the algebra of all bounded linear operators on $\mathcal{H}$.
For every operator $T\in\mathbb{B}(\mathcal{H})$ its range is denoted by $\mathcal{R}(T)$,
its null space by $\mathcal{N}(T)$, and its adjoint by $T^*$.
Throughout this paper, we assume that $A\in\mathbb{B}(\mathcal{H})$ is a positive operator, which
induces a positive semidefinite sesquilinear
form ${\langle \cdot, \cdot\rangle}_A: \,\mathcal{H}\times \mathcal{H} \rightarrow \mathbb{C}$
defined by ${\langle x, y\rangle}_A = \langle Ax, y\rangle$.
We denote by ${\|\cdot\|}_A$ the seminorm induced by ${\langle \cdot, \cdot\rangle}_{A}$, that is,
${\|x\|}_A = \sqrt{{\langle x, x\rangle}_{A}}$
for every $x\in\mathcal{H}$.
Observe that ${\|x\|}_{A} = 0$ if and only if $x\in \mathcal{N}(A)$.
Then ${\|\cdot\|}_{A}$ is a norm if and only if $A$ is one-to-one, and the seminormed
space $(\mathcal{H}, {\|\cdot\|}_{A})$ is complete if and only if $\mathcal{R}(A)$ is closed in $\mathcal{H}$.
For $T\in \mathbb{B}(\mathcal{H})$, an operator $R\in \mathbb{B}(\mathcal{H})$
is called an $A$-adjoint operator of $T$ if for every $x, y\in \mathcal{H}$,
we have ${\langle Tx, y\rangle}_{A} = {\langle x, Ry\rangle}_{A}$, that is, $AR = T^*A$.
Generally, the existence of an $A$-adjoint operator is not guaranteed.
The set of all operators that admit $A$-adjoints is denoted by $\mathbb{B}_{A}(\mathcal{H})$.
By Douglas' Theorem \cite{Dou}, we have
\begin{align*}
\mathbb{B}_{A}(\mathcal{H}) = \big\{T\in\mathbb{B}(\mathcal{H}): \, \mathcal{R}(T^*A) \subseteq \mathcal{R}(A)\big\}.
\end{align*}
If $T\in\mathbb{B}_{A}(\mathcal{H})$, then the reduced solution to the equation $AX = T^*A$ is
denoted by $T^{\sharp_A}$, which is called the distinguished $A$-adjoint
operator of $T$. It verifies that $AT^{\sharp_A} = T^*A$, $\mathcal{R}(T^{\sharp_A}) \subseteq \overline{\mathcal{R}(A)}$ and $\mathcal{N}(T^{\sharp_A}) = \mathcal{N}(T^*A)$. Note that $T^{\sharp_A} = A^{\dag}T^*A$, where $A^{\dag}$ is the Moore--Penrose inverse of $A$, which
is the unique linear mapping from $\mathcal{R}(A) \oplus \mathcal{R}(A)^{\perp}$
into $\mathcal{H}$ satisfying the Moore--Penrose equations:
\begin{align*}
AXA = A, \,\, XAX = X, \,\, XA = P_{\overline{\mathcal{R}(A)}}
\,\, \mbox{and} \,\, AX = P_{\overline{\mathcal{R}(A)}}|_{\mathcal{R}(A) \oplus {\mathcal{R}(A)}^{\perp}}.
\end{align*}
In general, $A^{\dag} \not \in \mathbb{B}(\mathcal{H})$.
Indeed, $A^{\dag} \in \mathbb{B}(\mathcal{H})$ if and only if $A$ has closed range; see, for example, \cite{M.K.X}.
Notice that if $T\in\mathbb{B}_{A}(\mathcal{H})$, then $T^{\sharp_A}\in\mathbb{B}_{A}(\mathcal{H})$,
$(T^{\sharp_A})^{\sharp_A} = P_{\overline{\mathcal{R}(A)}}TP_{\overline{\mathcal{R}(A)}}$ and
$\big((T^{\sharp_A})^{\sharp_A}\big)^{\sharp_A} = T^{\sharp_A}$.
Also, for $T, S\in\mathbb{B}_{A}(\mathcal{H})$, it is easy to see that $TS\in\mathbb{B}_{A}(\mathcal{H})$
and $(TS)^{\sharp_A} = S^{\sharp_A}T^{\sharp_A}$.
An operator $U\in\mathbb{B}_{A}(\mathcal{H})$ is called $A$-unitary if
${\|Ux\|}_{A} = {\|U^{\sharp_A}x\|}_{A} = {\|x\|}_{A}$ for all $x\in\mathcal{H}$.
It should be mentioned  that, an operator $U\in\mathbb{B}_{A}(\mathcal{H})$ is $A$-unitary if and only if
$U^{\sharp_A}U = (U^{\sharp_A})^{\sharp_A}U^{\sharp_A} = P_{\overline{\mathcal{R}(A)}}$.
The set of all operators admitting $A^{1/2}$-adjoints is denoted by $\mathbb{B}_{A^{1/2}}(\mathcal{H})$.
Again, by applying Douglas' Theorem, we obtain
\begin{align*}
\mathbb{B}_{A^{1/2}}(\mathcal{H}) = \big\{T\in \mathbb{B}(\mathcal{H}): \,\, \exists c>0;
\,\,{\|Tx\|}_{A} \le  c{\|x\|}_{A}, \,\, \forall x\in \mathcal{H}\big\}.
\end{align*}
Clearly, ${\langle \cdot, \cdot\rangle}_{A}$ induces a seminorm on $\mathbb{B}_{A^{1/2}}(\mathcal{H})$.
Indeed, if $T\in\mathbb{B}_{A^{1/2}}(\mathcal{H})$, then it holds that
\begin{align*}
{\|T\|}_{A} := \displaystyle{\sup_{0\neq x\in\overline{\mathcal{R}(A)}}}\frac{{\|Tx\|}_{A}}{{\|x\|}_{A}}
=\sup\big\{{\|Tx\|}_{A}: \,\, x\in\mathcal{H}, {\|x\|}_A =1\big\} < +\infty.
\end{align*}
It can be verified that, for $T\in\mathbb{B}_{A^{1/2}}(\mathcal{H})$,
we have ${\|Tx\|}_{A}\leq {\|T\|}_{A}{\|x\|}_{A}$ for all $x\in \mathcal{H}$.
This implies that, for $T, S\in\mathbb{B}_{A^{1/2}}(\mathcal{H})$, we have ${\|TS\|}_{A}\leq {\|T\|}_{A}{\|S\|}_{A}$.
Notice that it may happen that ${\|T\|}_A = + \infty$
for some $T\in\mathbb{B}(\mathcal{H})\setminus \mathbb{B}_{A^{1/2}}(\mathcal{H})$.
For example, let $A$ be the diagonal operator on
the Hilbert space $\ell^2$ given by $Ae_n = \frac{e_n}{n!}$,
where $\{e_n\}$ denotes the canonical basis of $\ell^2$
and consider the left shift operator $T \in \mathbb{B}(\ell^2)$.
An operator $T\in \mathbb{B}(\mathcal{H})$ is called $A$-positive if $AT$ is positive.
Note that if $T$ is $A$-positive, then
${\|T\|}_A = \sup\big\{{\langle Tx, x\rangle}_A:\,\,\, x\in \mathcal{H},\, {\|x\|}_A = 1\big\}$.
An operator $T\in\mathbb{B}(\mathcal{H})$ is said to be $A$-selfadjoint if $AT$ is selfadjoint,
that is, $AT = T^*A$. Obviously, an $A$-positive operator is always an $A$-selfadjoint operator.
Observe that if $T$ is $A$-selfadjoint, then $T\in\mathbb{B}_{A}(\mathcal{H})$.
However, it does not hold, in general, that $T = T^{\sharp_A}$.
More precisely, if $T\in\mathbb{B}_{A}(\mathcal{H})$, then $T = T^{\sharp_A}$ if and only if
$T$ is $A$-selfadjoint and $\mathcal{R}(T) \subseteq \overline{\mathcal{R}(A)}$.
Note that for $T\in\mathbb{B}_{A}(\mathcal{H})$, $T^{\sharp_A}T$ and  $TT^{\sharp_A}$ are $A$-selfadjoint and so
\begin{align}\label{I.1.01}
{\|T^{\sharp_A}T\|}_A = {\|TT^{\sharp_A}\|}_A = {\|T\|}^2_A = {\|T^{\sharp_A}\|}^2_A.
\end{align}
For an account of results, we refer the reader to \cite{Ar.Co.Go, Ar.Co.Go.2, Ma.Se.Su, Su}.

The $A$-numerical radius of $T\in\mathbb{B}(\mathcal{H})$ (see \cite{Ba.Ka.Ah} and the references therein)
is defined by
\begin{align*}
w_{A}(T) = \sup\Big\{\big|{\langle Tx, x\rangle}_A\big|: \,\,\, x\in \mathcal{H},\, {\|x\|}_A = 1\Big\}.
\end{align*}
It was recently shown in \cite[Theorem 2.5]{Z.3} that if $T\in\mathbb{B}_{A}(\mathcal{H})$, then
\begin{align*}
w_{A}(T) = \displaystyle{\sup_{\theta \in \mathbb{R}}}
{\left\|\frac{e^{i\theta}T + (e^{i\theta}T)^{\sharp_A}}{2}\right\|}_A.
\end{align*}
Moreover, it is known that if $T$ is $A$-selfadjoint, then $w_A(T) = {\|T\|}_{A}$.
One of the important properties of $w_{A}(\cdot)$ is that it is weakly $A$-unitarily
invariant (see \cite{B.F.P}), that is,
\begin{align}\label{I.1.03}
w_{A}(U^{\sharp_{A}}TU)= w_{A}(T),
\end{align}
for every $A$-unitary $U\in\mathbb{B}_{A}(\mathcal{H})$.
Another basic fact about the $A$-numerical radius is the power
inequality (see \cite{M.X.Z}), which asserts that
\begin{align}\label{I.1.04}
w_{A}(T^n) \leq w^n_{A}(T) \qquad (n\in\mathbb{N}).
\end{align}
Notice that it may happen that $w_A(T) = + \infty$ for some $T\in\mathbb{B}(\mathcal{H})$.
Indeed, one can take
$A = \begin{bmatrix}
1 & 0 \\
0 & 0
\end{bmatrix}$ and
$T = \begin{bmatrix}
0 & 1 \\
1 & 0
\end{bmatrix}$.
In fact, if $T\in\mathbb{B}(\mathcal{H})$ is such that
$T\big(\mathcal{N}(A)\big)\nsubseteq \mathcal{N}(A)$, then $w_A(T) = + \infty$.
However, $w_{A}(\cdot)$ is a seminorm on $\mathbb{B}_{A^{1/2}}(\mathcal{H})$
which is equivalent to the $A$-operator seminorm ${\|\cdot\|}_{A}$.
Namely, for $T\in \mathbb{B}_{A^{1/2}}(\mathcal{H})$, we have
\begin{align}\label{I.1.02}
\frac{1}{2}{\|T\|}_{A} \leq w_{A}(T)\leq {\|T\|}_{A}.
\end{align}
For proofs and more facts about $A$-numerical radius of operators, we refer the reader to
\cite{Ba.Ka.Ah, Ba.Ka.Ah.2, B.F.P, B.P.N, Feki.1, Feki.Sid, M.X.Z, Z.3}.

Now, let $\mathbb{A}$ be the $2\times2$ diagonal operator matrix whose diagonal entries are $A$,
that is, $\mathbb{A} = \begin{bmatrix}
A & 0 \\
0 & A
\end{bmatrix}$.
Clearly, $\mathbb{A} \in \mathbb{B}(\mathcal{H}\oplus\mathcal{H})^{+}$
and so $\mathbb{A}$ induces the semi-inner product
${\langle x, y\rangle}_{\mathbb{A}} = \langle \mathbb{A}x, y\rangle = {\langle x_1, y_1\rangle}_{A} + {\langle x_2, y_2\rangle}_{A}$,
for all $x = (x_1, x_2)\in \mathcal{H}\oplus\mathcal{H}$ and $y = (y_1, y_2)\in \mathcal{H}\oplus\mathcal{H}$.
Very recently, inspired by the numerical radius equalities and inequalities for operator matrices in
\cite{H.K.S.1, H.K.S.2, M.S, S.D.M, Sh}, some inequalities for the $\mathbb{A}$-numerical radius of $2\times 2$ operator matrices
have been computed in \cite{B.F.P, B.P.N}.
This paper is also devoted  to the study of the $\mathbb{A}$-numerical radius of $2\times 2$ block matrices.
More precisely, we first derive an upper bound for the $\mathbb{A}$-numerical
radius of the off-diagonal operator matric $\begin{bmatrix}
0 & X \\
Y & 0
\end{bmatrix}$ defined on $\mathcal{H}\oplus\mathcal{H}$.
We then find some upper bounds for the $\mathbb{A}$-numerical
radius of certain $2\times 2$ block matrices.
In particular, we obtain a refinement on the second inequality (\ref{I.1.02}).
Finally, we compute a new upper bound for the $\mathbb{A}$-numerical
radius of $2\times 2$ operator matrices.
Some of the obtained results are new even in the case that the underlying operator $A$ is the identity operator.
\section{Results}
In order to achieve the goal of this section, we need the following lemmas.
The first lemma was recently given in \cite{B.F.P, B.P.N}.
\begin{lemma}\label{L.2.01}
Let $T, S, X, Y\in\mathbb{B}_{A^{1/2}}(\mathcal{H})$. Then
\begin{itemize}
\item[(i)] ${\begin{bmatrix}
T & X \\
Y & S
\end{bmatrix}}^{\sharp_{\mathbb{A}}} = \begin{bmatrix}
T^{\sharp_{A}} & Y^{\sharp_{A}} \\
X^{\sharp_{A}} & S^{\sharp_{A}}
\end{bmatrix}$.
\item[(ii)] ${\left\|\begin{bmatrix}
X & 0 \\
0 & Y
\end{bmatrix}\right\|}_{\mathbb{A}} = {\left\|\begin{bmatrix}
0 & X \\
Y & 0
\end{bmatrix}\right\|}_{\mathbb{A}} = \max\big\{{\|X\|}_{A}, {\|Y\|}_{A}\big\}$.
\item[(iii)] $w_{\mathbb{A}}\left(\begin{bmatrix}
X & 0 \\
0 & Y
\end{bmatrix}\right) = \max\big\{w_{A}(X), w_{A}(Y)\big\}$.
\item[(iv)] $w_{\mathbb{A}}\left(\begin{bmatrix}
X & Y \\
Y & X
\end{bmatrix}\right) = \max\big\{w_{A}(X+Y), w_{A}(X-Y)\big\}$.

In particular, $w_{\mathbb{A}}\left(\begin{bmatrix}
0 & Y \\
Y & 0
\end{bmatrix}\right) = w_{A}(Y)$.
\end{itemize}
\end{lemma}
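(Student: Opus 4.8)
The plan is to prove the four items in order, using (i) to reduce (iii) and (iv) to computations with the semi-inner product ${\langle\cdot,\cdot\rangle}_{\mathbb{A}}$. For (i), I would read off the distinguished $\mathbb{A}$-adjoint directly from its defining relation. Writing $M=\begin{bmatrix} T & X \\ Y & S\end{bmatrix}$, the equation $\mathbb{A}R = M^{*}\mathbb{A}$ decouples into the four scalar equations $AR_{11}=T^{*}A$, $AR_{12}=Y^{*}A$, $AR_{21}=X^{*}A$, $AR_{22}=S^{*}A$, whose reduced solutions are $T^{\sharp_{A}},Y^{\sharp_{A}},X^{\sharp_{A}},S^{\sharp_{A}}$. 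Since each of these has range inside $\overline{\mathcal{R}(A)}$, the resulting block operator has range inside $\overline{\mathcal{R}(A)}\oplus\overline{\mathcal{R}(A)}=\overline{\mathcal{R}(\mathbb{A})}$, so by uniqueness of the reduced solution it equals $M^{\sharp_{\mathbb{A}}}$, which is exactly the asserted matrix.

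For (ii) and (iii) I would argue directly from the definitions. For $\xi=(x_{1},x_{2})$ the definition of ${\|\cdot\|}_{\mathbb{A}}$ gives ${\|\mathrm{diag}(X,Y)\xi\|}^{2}_{\mathbb{A}}={\|Xx_{1}\|}^{2}_{A}+{\|Yx_{2}\|}^{2}_{A}\le\max\{{\|X\|}^{2}_{A},{\|Y\|}^{2}_{A}\}\big({\|x_{1}\|}^{2}_{A}+{\|x_{2}\|}^{2}_{A}\big)$, and the identical bound holds for the off-diagonal matrix; choosing $\xi$ with one coordinate equal to $0$ attains equality and yields both directions at once. For (iii), expanding ${\langle\mathrm{diag}(X,Y)\xi,\xi\rangle}_{\mathbb{A}}={\langle Xx_{1},x_{1}\rangle}_{A}+{\langle Yx_{2},x_{2}\rangle}_{A}$ and using $|{\langle Tz,z\rangle}_{A}|\le w_{A}(T){\|z\|}^{2}_{A}$ (valid for every $z$, since ${\langle Tz,z\rangle}_{A}=0$ whenever $z\in\mathcal{N}(A)$ because $A$ is selfadjoint) gives the upper bound $\max\{w_{A}(X),w_{A}(Y)\}$, while test vectors supported on a single coordinate recover the reverse inequality.

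The core is (iv), which I would deduce from (iii) by conjugating with a suitable $\mathbb{A}$-unitary. Set $U=\frac{1}{\sqrt{2}}\begin{bmatrix} I & I \\ I & -I\end{bmatrix}$. A one-line computation shows ${\|U\xi\|}_{\mathbb{A}}={\|\xi\|}_{\mathbb{A}}$, and since $I^{\sharp_{A}}=P_{\overline{\mathcal{R}(A)}}=:P$, part (i) gives $U^{\sharp_{\mathbb{A}}}=\frac{1}{\sqrt{2}}\begin{bmatrix} P & P \\ P & -P\end{bmatrix}$, so that $U^{\sharp_{\mathbb{A}}}U=\mathrm{diag}(P,P)=P_{\overline{\mathcal{R}(\mathbb{A})}}$ and $U$ is $\mathbb{A}$-unitary. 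Conjugating $M=\begin{bmatrix} X & Y \\ Y & X\end{bmatrix}$ and cancelling the off-diagonal blocks yields $U^{\sharp_{\mathbb{A}}}MU=\mathrm{diag}\big(P(X+Y),P(X-Y)\big)$. Applying (iii) together with the weak $\mathbb{A}$-unitary invariance (\ref{I.1.03}) then gives $w_{\mathbb{A}}(M)=\max\{w_{A}(P(X+Y)),w_{A}(P(X-Y))\}$.

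The main obstacle, and the only nontrivial point, is the appearance of $P=P_{\overline{\mathcal{R}(A)}}$ in place of $I$ when $A$ fails to be injective. To close the argument I would establish the auxiliary identity $w_{A}(PT)=w_{A}(T)$: since $A$ is selfadjoint one has $\mathcal{N}(A)=\overline{\mathcal{R}(A)}^{\perp}$ and hence $AP=A$, so ${\langle PTz,z\rangle}_{A}={\langle APTz,z\rangle}={\langle ATz,z\rangle}={\langle Tz,z\rangle}_{A}$ for every $z$, whence the two numerical radii coincide. This collapses the expression to $\max\{w_{A}(X+Y),w_{A}(X-Y)\}$. The stated particular case then follows by taking $X=0$ and noting $w_{A}(-Y)=w_{A}(Y)$.
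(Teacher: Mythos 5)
The paper does not actually prove this lemma: it is quoted from \cite{B.F.P, B.P.N} without argument, so there is no in-paper proof to compare against. Your self-contained proof is correct and follows the route standard in those references (and in \cite{H.K.S.1} for $A=I$): read off the reduced solution of $\mathbb{A}R=M^{*}\mathbb{A}$ blockwise for (i), compute directly with the semi-inner product for (ii) and (iii), and deduce (iv) from (iii) by conjugating with $U=\tfrac{1}{\sqrt2}\begin{bmatrix} I & I\\ I & -I\end{bmatrix}$. Your treatment of the non-injective case is the genuinely careful part: since (i) forces the projection $P=P_{\overline{\mathcal{R}(A)}}$ into $U^{\sharp_{\mathbb{A}}}$, the identity $AP=A$, hence $w_A(PT)=w_A(T)$, is exactly what is needed to collapse $\mathrm{diag}\big(P(X+Y),P(X-Y)\big)$ to the stated answer, and many write-ups gloss over this.

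Two points to tighten, neither fatal. First, to invoke the invariance (\ref{I.1.03}) you need the \emph{full} definition of $\mathbb{A}$-unitarity, which in this paper requires both $U^{\sharp_{\mathbb{A}}}U=P_{\overline{\mathcal{R}(\mathbb{A})}}$ and $(U^{\sharp_{\mathbb{A}}})^{\sharp_{\mathbb{A}}}U^{\sharp_{\mathbb{A}}}=P_{\overline{\mathcal{R}(\mathbb{A})}}$ (equivalently ${\|U^{\sharp_{\mathbb{A}}}\xi\|}_{\mathbb{A}}={\|\xi\|}_{\mathbb{A}}$ as well as ${\|U\xi\|}_{\mathbb{A}}={\|\xi\|}_{\mathbb{A}}$); you verified only the forward half. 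The missing half follows from the same parallelogram computation once one notes ${\|Pz\|}_{A}={\|z\|}_{A}$, but it should be said. Second, for part (i) the hypothesis $T,S,X,Y\in\mathbb{B}_{A^{1/2}}(\mathcal{H})$ does not by itself guarantee that $T^{\sharp_{A}}$, etc., exist; the statement must be read with $\mathbb{B}_{A}(\mathcal{H})$ there (as in the cited sources), which your argument implicitly assumes when it speaks of reduced solutions. With those two remarks incorporated, the proof is complete.
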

The second lemma is stated as follows.
\begin{lemma}\label{L.2.02}
Let $x, y, z \in \mathcal{H}\oplus\mathcal{H}$ with ${\|z\|}_{\mathbb{A}} = 1$. Then
\begin{align*}
\big|{\langle x, z\rangle}_{\mathbb{A}}{\langle z, y\rangle}_{\mathbb{A}}\big|
\leq \frac{1}{2}\big({\|x\|}_{\mathbb{A}}{\|y\|}_{\mathbb{A}} + |{\langle x, y\rangle}_{\mathbb{A}}|\big).
\end{align*}
\end{lemma}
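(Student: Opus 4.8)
The plan is to establish a semi-inner product version of the classical Buzano inequality. Its usual proof rests only on sesquilinearity and on the Cauchy--Schwarz inequality, both of which are available for the positive semidefinite form ${\langle\cdot,\cdot\rangle}_{\mathbb{A}}$; positive-definiteness of a genuine inner product is never invoked, so the argument should transfer essentially verbatim to the semi-Hilbertian setting.

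Concretely, write $a = {\langle x, z\rangle}_{\mathbb{A}}$ and introduce the ``reflected'' vector $u = 2az - x = 2{\langle x, z\rangle}_{\mathbb{A}}\,z - x \in \mathcal{H}\oplus\mathcal{H}$. The first step is the algebraic identity
\begin{align*}
{\langle x, z\rangle}_{\mathbb{A}}{\langle z, y\rangle}_{\mathbb{A}} = \frac{1}{2}{\langle x, y\rangle}_{\mathbb{A}} + \frac{1}{2}{\langle u, y\rangle}_{\mathbb{A}},
\end{align*}
which follows at once by expanding ${\langle u, y\rangle}_{\mathbb{A}} = 2a{\langle z, y\rangle}_{\mathbb{A}} - {\langle x, y\rangle}_{\mathbb{A}}$ via linearity in the first slot and simplifying.

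The key step is to show that the reflection preserves the $\mathbb{A}$-seminorm, namely ${\|u\|}_{\mathbb{A}} = {\|x\|}_{\mathbb{A}}$. Expanding ${\|u\|}^2_{\mathbb{A}} = {\langle 2az - x, 2az - x\rangle}_{\mathbb{A}}$ by sesquilinearity produces
\begin{align*}
4|a|^2{\|z\|}^2_{\mathbb{A}} - 2a{\langle z, x\rangle}_{\mathbb{A}} - 2\bar{a}{\langle x, z\rangle}_{\mathbb{A}} + {\|x\|}^2_{\mathbb{A}}.
\end{align*}
Using ${\|z\|}_{\mathbb{A}} = 1$ together with ${\langle x, z\rangle}_{\mathbb{A}} = a$ and ${\langle z, x\rangle}_{\mathbb{A}} = \bar{a}$, the first three terms collapse to $4|a|^2 - 2|a|^2 - 2|a|^2 = 0$, leaving exactly ${\|x\|}^2_{\mathbb{A}}$. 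This is precisely where the normalization hypothesis ${\|z\|}_{\mathbb{A}} = 1$ is consumed.

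Finally, taking moduli in the identity, applying the triangle inequality, and bounding $|{\langle u, y\rangle}_{\mathbb{A}}| \leq {\|u\|}_{\mathbb{A}}{\|y\|}_{\mathbb{A}}$ by Cauchy--Schwarz for the semi-inner product, then substituting ${\|u\|}_{\mathbb{A}} = {\|x\|}_{\mathbb{A}}$, yields
\begin{align*}
\big|{\langle x, z\rangle}_{\mathbb{A}}{\langle z, y\rangle}_{\mathbb{A}}\big| \leq \frac{1}{2}\big|{\langle x, y\rangle}_{\mathbb{A}}\big| + \frac{1}{2}{\|x\|}_{\mathbb{A}}{\|y\|}_{\mathbb{A}},
\end{align*}
which is the asserted bound. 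I do not anticipate a genuine obstacle: the only point requiring care is confirming that Cauchy--Schwarz remains valid for the merely positive semidefinite form ${\langle\cdot,\cdot\rangle}_{\mathbb{A}}$ (it does, by the standard discriminant argument, which tolerates degeneracy), after which every step is a direct computation.
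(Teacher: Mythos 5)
Your proof is correct, but it takes a genuinely different route from the paper's. The paper does not reprove Buzano's inequality at all: it quotes the classical inequality $|\langle a,c\rangle\langle c,b\rangle|\le \tfrac12(\|a\|\,\|b\|+|\langle a,b\rangle|)$ for the honest inner product on $\mathcal{H}\oplus\mathcal{H}$ from the reference \cite{Buz}, and then transfers it to the semi-inner product by substituting $a=\mathbb{A}^{1/2}x$, $b=\mathbb{A}^{1/2}y$, $c=\mathbb{A}^{1/2}z$ and using ${\langle x,y\rangle}_{\mathbb{A}}=\langle \mathbb{A}^{1/2}x,\mathbb{A}^{1/2}y\rangle$ together with $\|\mathbb{A}^{1/2}z\|={\|z\|}_{\mathbb{A}}=1$. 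You instead run the reflection proof of Buzano directly on the semidefinite form: the identity ${\langle x,z\rangle}_{\mathbb{A}}{\langle z,y\rangle}_{\mathbb{A}}=\tfrac12{\langle x,y\rangle}_{\mathbb{A}}+\tfrac12{\langle u,y\rangle}_{\mathbb{A}}$ with $u=2{\langle x,z\rangle}_{\mathbb{A}}z-x$, the computation ${\|u\|}_{\mathbb{A}}={\|x\|}_{\mathbb{A}}$ (which does use the Hermitian symmetry ${\langle z,x\rangle}_{\mathbb{A}}=\overline{{\langle x,z\rangle}_{\mathbb{A}}}$, valid since $\mathbb{A}$ is positive), and Cauchy--Schwarz for positive semidefinite forms; all three steps check out. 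Your argument is self-contained and never needs the square root of $\mathbb{A}$, which makes it slightly more portable (it works for any positive semidefinite sesquilinear form, not just those arising as $\langle \mathbb{A}\cdot,\cdot\rangle$ with $\mathbb{A}$ bounded positive); the paper's reduction is shorter given that the classical result is already in the literature, and reuses the same $\mathbb{A}^{1/2}$ device that underlies most seminorm identities in this setting.
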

\begin{proof}
Notice first that, by \cite{Buz}, we have
\begin{align}\label{drago}
|\langle a, c\rangle\langle c, b\rangle| \leq \frac{1}{2}\big(\|a\|\,\|b\| + |\langle a, b\rangle|\big),
\end{align}
for every $a, b, c\in \mathcal{H}\oplus\mathcal{H}$ with $\|c\| = 1$.
Now, let $x, y, z \in \mathcal{H}\oplus\mathcal{H}$ with ${\|z\|}_{\mathbb{A}} = 1$.
Since $\|{\mathbb{A}}^{1/2}z\| = 1$, then by using (\ref{drago}), we see that
\begin{align*}
\big|{\langle x, z\rangle}_{\mathbb{A}}{\langle z, y\rangle}_{\mathbb{A}}\big|
& = \big|\langle {\mathbb{A}}^{1/2}x, {\mathbb{A}}^{1/2}z\rangle\langle {\mathbb{A}}^{1/2}z, {\mathbb{A}}^{1/2}y\rangle\big|
\\& \leq \frac{1}{2}\Big(\|{\mathbb{A}}^{1/2}x\|\,\| {\mathbb{A}}^{1/2}y\| + |\langle {\mathbb{A}}^{1/2}x,  {\mathbb{A}}^{1/2}y\rangle|\Big)
\\& = \frac{1}{2}\big({\|x\|}_{\mathbb{A}}{\|y\|}_{\mathbb{A}} + |{\langle x, y\rangle}_{\mathbb{A}}|\big).
\end{align*}
This proves the desired result.
\end{proof}
Now, we are in the position to state an upper bound for the $\mathbb{A}$-numerical
radius of the off-diagonal part $\begin{bmatrix}
0 & X \\
Y & 0
\end{bmatrix}$ of a $2\times 2$ operator matric $\begin{bmatrix}
T & X \\
Y & S
\end{bmatrix}$ defined on $\mathcal{H}\oplus\mathcal{H}$.
\begin{theorem}\label{T.2.03}
Let $X, Y\in\mathbb{B}_{A^{1/2}}(\mathcal{H})$. Then

\begingroup\makeatletter\def\f@size{9}\check@mathfonts
\begin{align*}
w^2_{\mathbb{A}}\left(\begin{bmatrix}
0 & X \\
Y & 0
\end{bmatrix}\right) &\leq \frac{1}{4}\max\Big\{{\big\|X^{\sharp_A}X + YY^{\sharp_A}\big\|}_{A}, {\big\|XX^{\sharp_A} + Y^{\sharp_A}Y\big\|}_{A}\Big\}
+ \frac{1}{2}\max\big\{w_{A}(XY), w_{A}(YX)\big\}.
\end{align*}
\endgroup
\end{theorem}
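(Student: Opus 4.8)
The plan is to reduce the inequality to a single application of Lemma~\ref{L.2.02} with a well-chosen pair of vectors, after recognizing both terms on the right-hand side as $\mathbb{A}$-level quantities attached to the block operator itself. Write $T = \begin{bmatrix} 0 & X \\ Y & 0 \end{bmatrix}$, so that by Lemma~\ref{L.2.01}(i) its distinguished $\mathbb{A}$-adjoint is $T^{\sharp_{\mathbb{A}}} = \begin{bmatrix} 0 & Y^{\sharp_A} \\ X^{\sharp_A} & 0 \end{bmatrix}$, and a direct multiplication gives $T^2 = \begin{bmatrix} XY & 0 \\ 0 & YX \end{bmatrix}$. By Lemma~\ref{L.2.01}(iii), $w_{\mathbb{A}}(T^2) = \max\{w_A(XY), w_A(YX)\}$, which is exactly twice the second summand; thus the target reads $w_{\mathbb{A}}^2(T) \le \tfrac14 M + \tfrac12 w_{\mathbb{A}}(T^2)$, where $M$ denotes the displayed maximum of the two $A$-operator seminorms.

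Next I would fix $z = (z_1, z_2)$ with $\|z\|_{\mathbb{A}} = 1$ and apply Lemma~\ref{L.2.02} with $x = Tz$ and $y = T^{\sharp_{\mathbb{A}}} z$. The defining property of the $\mathbb{A}$-adjoint gives $\langle z, T^{\sharp_{\mathbb{A}}} z\rangle_{\mathbb{A}} = \langle Tz, z\rangle_{\mathbb{A}}$, so the left-hand side of the lemma collapses to $|\langle Tz, z\rangle_{\mathbb{A}}|^2$. For the inner-product term on the right I would verify, working block by block and using $A Y^{\sharp_A} = Y^* A$, that $\langle Tz, T^{\sharp_{\mathbb{A}}} z\rangle_{\mathbb{A}} = \langle YX z_2, z_2\rangle_A + \langle XY z_1, z_1\rangle_A = \langle T^2 z, z\rangle_{\mathbb{A}}$, whence $|\langle Tz, T^{\sharp_{\mathbb{A}}} z\rangle_{\mathbb{A}}| \le w_{\mathbb{A}}(T^2)$. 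This already yields $|\langle Tz, z\rangle_{\mathbb{A}}|^2 \le \tfrac12 \|Tz\|_{\mathbb{A}} \|T^{\sharp_{\mathbb{A}}} z\|_{\mathbb{A}} + \tfrac12 w_{\mathbb{A}}(T^2)$.

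It then remains to control the product of seminorms, which I would bound by the arithmetic mean $\tfrac12(\|Tz\|_{\mathbb{A}}^2 + \|T^{\sharp_{\mathbb{A}}} z\|_{\mathbb{A}}^2)$ and compute each square explicitly. From $Tz = (Xz_2, Yz_1)$ and the identity $\|Xz_2\|_A^2 = \langle X^{\sharp_A} X z_2, z_2\rangle_A$ one gets $\|Tz\|_{\mathbb{A}}^2 = \langle X^{\sharp_A} X z_2, z_2\rangle_A + \langle Y^{\sharp_A} Y z_1, z_1\rangle_A$, and similarly $\|T^{\sharp_{\mathbb{A}}} z\|_{\mathbb{A}}^2 = \langle Y Y^{\sharp_A} z_2, z_2\rangle_A + \langle X X^{\sharp_A} z_1, z_1\rangle_A$. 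Summing and regrouping by $z_1$ and $z_2$, each $A$-positive quadratic form is dominated by its $A$-seminorm times $\|z_i\|_A^2$, so the total is at most $M(\|z_1\|_A^2 + \|z_2\|_A^2) = M$. Combining the pieces gives $|\langle Tz, z\rangle_{\mathbb{A}}|^2 \le \tfrac14 M + \tfrac12 w_{\mathbb{A}}(T^2)$, and taking the supremum over all unit $z$ finishes the proof.

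The one place demanding care is the semi-inner-product bookkeeping: unlike the Hilbert-space case one cannot pass from $\langle T^{\sharp_{\mathbb{A}}} z, T^{\sharp_{\mathbb{A}}} z\rangle_{\mathbb{A}}$ to $\langle T T^{\sharp_{\mathbb{A}}} z, z\rangle_{\mathbb{A}}$ by a bare adjoint move, since $(T^{\sharp_{\mathbb{A}}})^{\sharp_{\mathbb{A}}} \ne T$ in general. The clean way around this is to expand $\|T^{\sharp_{\mathbb{A}}} z\|_{\mathbb{A}}^2$ blockwise and use $A Y^{\sharp_A} = Y^* A$ together with the $A$-selfadjointness of $Y Y^{\sharp_A}$ and $X X^{\sharp_A}$ (so that the resulting quadratic forms are real and nonnegative), which legitimizes the identifications used above.
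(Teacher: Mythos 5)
Your proposal is correct and follows essentially the same route as the paper: you apply Lemma~\ref{L.2.02} to the pair $Tz$, $T^{\sharp_{\mathbb{A}}}z$, identify $\langle Tz, T^{\sharp_{\mathbb{A}}}z\rangle_{\mathbb{A}}$ with $\langle T^2z,z\rangle_{\mathbb{A}}$, and control the product of seminorms by the arithmetic--geometric mean inequality. The only cosmetic difference is that you expand the resulting quadratic forms blockwise in $z=(z_1,z_2)$, whereas the paper keeps everything at the level of the $2\times 2$ operators $M^{\sharp_{\mathbb{A}}}M$, $MM^{\sharp_{\mathbb{A}}}$ and invokes Lemma~\ref{L.2.01}(ii) to evaluate $\|M^{\sharp_{\mathbb{A}}}M+MM^{\sharp_{\mathbb{A}}}\|_{\mathbb{A}}$; the two bookkeeping styles are equivalent.
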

\begin{proof}
Put $M=\begin{bmatrix}
0 & X \\
Y & 0
\end{bmatrix}$, $N=\begin{bmatrix}
XX^{\sharp_{A}} & 0 \\
0 & YY^{\sharp_{A}}
\end{bmatrix}$, $P=\begin{bmatrix}
Y^{\sharp_{A}}Y & 0 \\
0 & X^{\sharp_{A}}X
\end{bmatrix}$ and $Q=\begin{bmatrix}
XY & 0 \\
0 & YX
\end{bmatrix}$, to simplify the writing.
Then
\begin{align}\label{I.1.T.2.03}
P+N = \begin{bmatrix}
XX^{\sharp_{A}} + Y^{\sharp_{A}}Y & 0 \\
0 & X^{\sharp_{A}}X + YY^{\sharp_{A}}
\end{bmatrix}.
\end{align}
Furthermore, by Lemma \ref{L.2.01}(i), we get
\begin{align}\label{I.2.T.2.03}
MM^{\sharp_{\mathbb{A}}} = N, \quad M^{\sharp_{\mathbb{A}}}M = P \quad \mbox{and} \quad M^2 = Q.
\end{align}
Now, let $z\in \mathcal{H}\oplus\mathcal{H}$ with ${\|z\|}_{\mathbb{A}} = 1$.
Since ${\langle Mz, z\rangle}_{\mathbb{A}} =
{\langle z,
M^{\sharp_{\mathbb{A}}}z\rangle}_{\mathbb{A}}$, we have

\begingroup\makeatletter\def\f@size{10}\check@mathfonts
\begin{align*}
2\big|{\langle Mz, z\rangle}_{\mathbb{A}}\big|^2& = 2\Big|{\langle Mz, z\rangle}_{\mathbb{A}}
{\langle z, M^{\sharp_{\mathbb{A}}}z\rangle}_{\mathbb{A}}\Big|
\\& \leq {\|Mz\|}_{\mathbb{A}}
{\|M^{\sharp_{\mathbb{A}}}z\|}_{\mathbb{A}}
+ \big|{\langle Mz, M^{\sharp_{\mathbb{A}}}z\rangle}_{\mathbb{A}}\big|
\qquad \qquad \qquad \quad \big(\mbox{by Lemma \ref{L.2.02}}\big)
\\& = \sqrt{{\langle Mz, Mz\rangle}_{\mathbb{A}}
{\langle M^{\sharp_{\mathbb{A}}}z, M^{\sharp_{\mathbb{A}}}z\rangle}_{\mathbb{A}}}
+ \big|{\langle Mz, M^{\sharp_{\mathbb{A}}}z\rangle}_{\mathbb{A}}\big|
\\& = \sqrt{{\langle M^{\sharp_{\mathbb{A}}}Mz, z\rangle}_{\mathbb{A}}
{\langle MM^{\sharp_{\mathbb{A}}}z, z\rangle}_{\mathbb{A}}}
+ \big|{\langle M^2z, z\rangle}_{\mathbb{A}}\big|
\\& = \sqrt{{\langle Pz, z\rangle}_{\mathbb{A}}
{\langle Nz, z\rangle}_{\mathbb{A}}} + |{\langle Qz, z\rangle}_{\mathbb{A}}|
\qquad \qquad \qquad \qquad \qquad \quad \big(\mbox{by (\ref{I.2.T.2.03})}\big)
\\& \leq \frac{1}{2}\Big({\langle Pz, z\rangle}_{\mathbb{A}} +
{\langle Nz, z\rangle}_{\mathbb{A}}\Big) + w_{\mathbb{A}}(Q)
\\& \qquad \qquad \qquad \qquad \big(\mbox{by the arithmetic-geometric mean inequality}\big)
\\& = \frac{1}{2}{\big\langle (P + N)z, z\big\rangle}_{\mathbb{A}} + \max\big\{w_{A}(XY), w_{A}(YX)\big\}
\qquad \big(\mbox{by Lemma \ref{L.2.01}(iii)}\big)
\\& \leq \frac{1}{2}{\|P+N\|}_{\mathbb{A}}
+ \max\big\{w_{A}(XY), w_{A}(YX)\big\}
\\& \qquad \qquad \qquad  \qquad \Big(\mbox{since $P+N$ is an $\mathbb{A}$-positive operator}\Big)
\\& = \frac{1}{2}\max\Big\{{\big\|X^{\sharp_A}X + YY^{\sharp_A}\big\|}_{A}, {\big\|XX^{\sharp_A} + Y^{\sharp_A}Y\big\|}_{A}\Big\}
+ \max\big\{w_{A}(XY), w_{A}(YX)\big\}.
\\& \qquad \qquad \qquad \qquad \qquad \qquad \qquad \qquad \big(\mbox{by Lemma \ref{L.2.01}(ii) and (\ref{I.1.T.2.03})}\big)
\end{align*}
\endgroup
Hence
\begingroup\makeatletter\def\f@size{8}\check@mathfonts
\begin{align*}
\big|{\langle Mz, z\rangle}_{\mathbb{A}}\big|^2& \leq \frac{1}{4}\max\Big\{{\big\|X^{\sharp_A}X + YY^{\sharp_A}\big\|}_{A}, {\big\|XX^{\sharp_A} + Y^{\sharp_A}Y\big\|}_{A}\Big\}
+ \frac{1}{2}\max\big\{w_{A}(XY), w_{A}(YX)\big\}.
\end{align*}
\endgroup
Taking the supremum in the above inequality over $z\in \mathcal{H}\oplus\mathcal{H}$ with ${\|z\|}_{\mathbb{A}} = 1$,
we deduce the desired inequality.
\end{proof}
\begin{remark}\label{R.2.04}
Let $X, Y\in\mathbb{B}_{A^{1/2}}(\mathcal{H})$.
By Theorem \ref{T.2.03}, (\ref{I.1.01}) and (\ref{I.1.02}), we have
\begingroup\makeatletter\def\f@size{9}\check@mathfonts
\begin{align*}
w_{\mathbb{A}}\left(\begin{bmatrix}
0 & X \\
Y & 0
\end{bmatrix}\right) &\leq \sqrt{\frac{1}{4}\max\Big\{{\big\|X^{\sharp_A}X + YY^{\sharp_A}\big\|}_{A}, {\big\|XX^{\sharp_A} + Y^{\sharp_A}Y\big\|}_{A}\Big\}
+ \frac{1}{2}\max\big\{w_{A}(XY), w_{A}(YX)\big\}}
\\& \leq \sqrt{\frac{1}{4}\max\Big\{{\|X^{\sharp_A}X\|}_{A} + {\|YY^{\sharp_A}\|}_{A}, {\|XX^{\sharp_A}\|}_{A} + {\|Y^{\sharp_A}Y\|}_{A}\Big\}
+ \frac{1}{2}\max\big\{{\|XY\|}_{A}, {\|YX\|}_{A}\big\}}
\\& \leq \sqrt{\frac{1}{4}\big ({\|X\|}^2_{A} + {\|Y\|}^2_{A}\big)
+ \frac{1}{2}{\|X\|}_{A}{\|Y\|}_{A}}
\\& = \frac{{\|X\|}_{A} + {\|Y\|}_{A}}{2},
\end{align*}
\endgroup
and hence
\begingroup\makeatletter\def\f@size{12}\check@mathfonts
\begin{align}\label{I.1.R.2.04}
w_{\mathbb{A}}\left(\begin{bmatrix}
0 & X \\
Y & 0
\end{bmatrix}\right) \leq \frac{{\|X\|}_{A} + {\|Y\|}_{A}}{2}.
\end{align}
\endgroup
On the other hand, since ${\begin{bmatrix}
0 & X \\
Y & 0
\end{bmatrix}}^2=\begin{bmatrix}
XY & 0 \\
0 & YX
\end{bmatrix}$, by the power inequality (\ref{I.1.04}) and Lemma \ref{L.2.01}(iii),
we have
\begingroup\makeatletter\def\f@size{10}\check@mathfonts
\begin{align*}
w^2_{\mathbb{A}}\left(\begin{bmatrix}
0 & X \\
Y & 0
\end{bmatrix}\right) \geq w_{\mathbb{A}}\left({\begin{bmatrix}
0 & X \\
Y & 0
\end{bmatrix}}^2\right)
= w_{\mathbb{A}}\left(\begin{bmatrix}
XY & 0 \\
0 & YX
\end{bmatrix}\right)
= \max\big\{w_{A}(XY), w_{A}(YX)\big\}
\end{align*}
\endgroup
and so
\begingroup\makeatletter\def\f@size{12}\check@mathfonts
\begin{align}\label{I.2.R.2.04}
\max\big\{w^{1/2}_{A}(XY), w^{1/2}_{A}(YX)\big\}
\leq w_{\mathbb{A}}\left(\begin{bmatrix}
0 & X \\
Y & 0
\end{bmatrix}\right).
\end{align}
\endgroup
Therefore, from (\ref{I.1.R.2.04}) and (\ref{I.2.R.2.04}) it follows that
\begingroup\makeatletter\def\f@size{12}\check@mathfonts
\begin{align*}
\max\big\{w^{1/2}_{A}(XY), w^{1/2}_{A}(YX)\big\}
\leq w_{\mathbb{A}}\left(\begin{bmatrix}
0 & X \\
Y & 0
\end{bmatrix}\right) \leq \frac{{\|X\|}_{A} + {\|Y\|}_{A}}{2}.
\end{align*}
\endgroup
\end{remark}
As an immediate consequence of the preceding theorem, we give an improvement of the second inequality in (\ref{I.1.02}).
\begin{corollary}\cite[Theorem 2.11]{Z.3}\label{C.2.05}
Let $X\in\mathbb{B}_{A^{1/2}}(\mathcal{H})$. Then
\begingroup\makeatletter\def\f@size{12}\check@mathfonts
\begin{align*}
w_{A}(X)\leq \frac{1}{2}\sqrt{{\big\|X^{\sharp_A}X + XX^{\sharp_A}\big\|}_{A} + 2w_{A}(X^2)}.
\end{align*}
\endgroup
\end{corollary}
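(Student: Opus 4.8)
The plan is to specialize Theorem \ref{T.2.03} to the symmetric case $Y = X$. Setting $Y=X$ there, the two quantities inside the first maximum coincide, since ${\big\|X^{\sharp_A}X + XX^{\sharp_A}\big\|}_{A} = {\big\|XX^{\sharp_A} + X^{\sharp_A}X\big\|}_{A}$ by commutativity of the sum, and the two quantities inside the second maximum both reduce to $w_A(X^2)$. Both maxima therefore collapse, and Theorem \ref{T.2.03} yields
\begin{align*}
w^2_{\mathbb{A}}\left(\begin{bmatrix} 0 & X \\ X & 0 \end{bmatrix}\right) \leq \frac{1}{4}{\big\|X^{\sharp_A}X + XX^{\sharp_A}\big\|}_{A} + \frac{1}{2}w_A(X^2).
\end{align*}

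Next I would evaluate the left-hand side using the ``in particular'' part of Lemma \ref{L.2.01}(iv), which gives $w_{\mathbb{A}}\left(\begin{bmatrix} 0 & X \\ X & 0 \end{bmatrix}\right) = w_A(X)$. Substituting this produces $w^2_A(X) \leq \frac{1}{4}{\big\|X^{\sharp_A}X + XX^{\sharp_A}\big\|}_{A} + \frac{1}{2}w_A(X^2)$. Taking square roots of both sides and pulling the factor $\tfrac14$ out from under the radical then delivers exactly the asserted bound $w_{A}(X)\leq \frac{1}{2}\sqrt{{\big\|X^{\sharp_A}X + XX^{\sharp_A}\big\|}_{A} + 2w_{A}(X^2)}$.

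Since the corollary is flagged as an immediate consequence of the theorem, no genuine obstacle is expected. The only points demanding a moment's attention are the collapse of both maxima under the choice $Y=X$ (both the seminorm term and the numerical-radius term lose their maximum because their two arguments become identical) and the correct invocation of Lemma \ref{L.2.01}(iv) to compute the $\mathbb{A}$-numerical radius of the symmetric off-diagonal block. One should also note that the hypothesis $X \in \mathbb{B}_{A^{1/2}}(\mathcal{H})$ is precisely what is needed to apply Theorem \ref{T.2.03} with $Y = X$, so the specialization is legitimate.
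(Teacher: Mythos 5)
Your proposal is correct and follows exactly the paper's own argument: specialize Theorem \ref{T.2.03} to $Y=X$ (both maxima collapse as you note) and then evaluate the left-hand side via the ``in particular'' case of Lemma \ref{L.2.01}(iv). No issues.
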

\begin{proof}
By letting $Y = X$ in Theorem \ref{T.2.03}, we have
\begingroup\makeatletter\def\f@size{12}\check@mathfonts
\begin{align*}
w^2_{\mathbb{A}}\left(\begin{bmatrix}
0 & X \\
X & 0
\end{bmatrix}\right)\leq \frac{1}{4}{\big\|X^{\sharp_A}X + XX^{\sharp_A}\big\|}_{A} + \frac{1}{2}w_{A}(X^2).
\end{align*}
\endgroup
Now, by Lemma \ref{L.2.01} (iv), we deduce the desired result.
\end{proof}
The following results are another consequences of Theorem \ref{T.2.03} for certain $2\times 2$ operator matrices.
\begin{corollary}\label{C.2.06}
Let $X\in\mathbb{B}_{A^{1/2}}(\mathcal{H})$. Then
\begingroup\makeatletter\def\f@size{12}\check@mathfonts
\begin{align*}
w_{\mathbb{A}}\left(\begin{bmatrix}
0 & X \\
0 & 0
\end{bmatrix}\right) = w_{\mathbb{A}}\left(\begin{bmatrix}
0 & 0 \\
X & 0
\end{bmatrix}\right)=\frac{1}{2}{\|X\|}_{A}.
\end{align*}
\endgroup
\end{corollary}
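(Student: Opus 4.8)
The plan is to sandwich the quantity between matching upper and lower bounds, both of which collapse to $\frac{1}{2}{\|X\|}_A$. The upper bound will come from Theorem \ref{T.2.03} by specializing one of the two off-diagonal entries to $0$, while the lower bound will come from the left-hand inequality in (\ref{I.1.02}) together with the norm computation in Lemma \ref{L.2.01}(ii). Since equality is claimed, both directions must be pushed through.

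For the first matrix I would apply Theorem \ref{T.2.03} with $Y=0$. Then both cross terms $XY$ and $YX$ vanish, so $\max\big\{w_A(XY), w_A(YX)\big\}=0$, and the remaining two expressions reduce to ${\|X^{\sharp_A}X\|}_A$ and ${\|XX^{\sharp_A}\|}_A$. By (\ref{I.1.01}) these both equal ${\|X\|}_A^2$, so Theorem \ref{T.2.03} yields
\begin{align*}
w^2_{\mathbb{A}}\left(\begin{bmatrix} 0 & X \\ 0 & 0 \end{bmatrix}\right) \leq \frac{1}{4}{\|X\|}_A^2,
\end{align*}
that is, $w_{\mathbb{A}}\big(\left[\begin{smallmatrix} 0 & X \\ 0 & 0 \end{smallmatrix}\right]\big) \leq \frac{1}{2}{\|X\|}_A$. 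For the second matrix I would instead set $X=0$ in Theorem \ref{T.2.03} and keep the nonzero entry in the $Y$-slot; the same cancellation of the cross terms occurs, and (\ref{I.1.01}) again produces the bound $\frac{1}{2}{\|X\|}_A$.

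For the lower bound I would invoke the left inequality of (\ref{I.1.02}), applied to the positive operator $\mathbb{A}$ on $\mathcal{H}\oplus\mathcal{H}$, to get $\frac{1}{2}\big\|\left[\begin{smallmatrix} 0 & X \\ 0 & 0 \end{smallmatrix}\right]\big\|_{\mathbb{A}} \leq w_{\mathbb{A}}\big(\left[\begin{smallmatrix} 0 & X \\ 0 & 0 \end{smallmatrix}\right]\big)$. Taking $Y=0$ in Lemma \ref{L.2.01}(ii) shows that $\big\|\left[\begin{smallmatrix} 0 & X \\ 0 & 0 \end{smallmatrix}\right]\big\|_{\mathbb{A}} = \max\big\{{\|X\|}_A, 0\big\} = {\|X\|}_A$, whence $\frac{1}{2}{\|X\|}_A \leq w_{\mathbb{A}}\big(\left[\begin{smallmatrix} 0 & X \\ 0 & 0 \end{smallmatrix}\right]\big)$. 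Combining this with the upper bound gives the claimed equality, and the identical argument with $X=0$ (nonzero entry in the $Y$-slot) settles the second matrix.

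I do not expect a genuine obstacle here, since every ingredient is already in place; the only points worth checking are bookkeeping ones. First, one must confirm that the block operator lies in $\mathbb{B}_{\mathbb{A}^{1/2}}(\mathcal{H}\oplus\mathcal{H})$ so that the seminorm ${\|\cdot\|}_{\mathbb{A}}$, the inequality (\ref{I.1.02}), and Lemma \ref{L.2.01} all apply to it; this follows from $X\in\mathbb{B}_{A^{1/2}}(\mathcal{H})$ exactly as in Theorem \ref{T.2.03}. Second, one must verify that the $\max$ in Lemma \ref{L.2.01}(ii) selects ${\|X\|}_A$ rather than the zero entry, which is immediate. The mild subtlety is simply that the upper bound in (\ref{I.1.02}) is too weak to force equality, so the lower inequality of (\ref{I.1.02}) must be used in tandem with the exact norm value from Lemma \ref{L.2.01}(ii).
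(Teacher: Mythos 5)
Your proposal is correct and follows essentially the same route as the paper: the upper bound via Theorem \ref{T.2.03} with one off-diagonal entry set to zero together with (\ref{I.1.01}), and the matching lower bound from the left inequality of (\ref{I.1.02}) combined with the norm identity in Lemma \ref{L.2.01}(ii). The only cosmetic difference is that you spell out the second matrix explicitly, whereas the paper leaves it as an analogous case.
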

\begin{proof}
By letting $Y = 0$ in Theorem \ref{T.2.03},
we have
\begingroup\makeatletter\def\f@size{12}\check@mathfonts
\begin{align*}
w^2_{\mathbb{A}}\left(\begin{bmatrix}
0 & X \\
0 & 0
\end{bmatrix}\right)\leq \frac{1}{4}{\|X^{\sharp_A}X \|}_{A},
\end{align*}
\endgroup
wherefrom, by (\ref{I.1.01}) we obtain
\begingroup\makeatletter\def\f@size{12}\check@mathfonts
\begin{align*}
w_{\mathbb{A}}\left(\begin{bmatrix}
0 & X \\
0 & 0
\end{bmatrix}\right)\leq \frac{1}{2}{\|X\|}_{A}.
\end{align*}
\endgroup
Also, by (\ref{I.1.02}) and Lemma \ref{L.2.01} (ii), we have
\begingroup\makeatletter\def\f@size{12}\check@mathfonts
\begin{align*}
\frac{1}{2}{\|X\|}_{A} = \frac{1}{2}{\left\|\begin{bmatrix}
0 & X \\
0 & 0
\end{bmatrix}\right\|}_{\mathbb{A}}
\leq w_{\mathbb{A}}\left(\begin{bmatrix}
0 & X \\
0 & 0
\end{bmatrix}\right).
\end{align*}
\endgroup
Thus $w_{\mathbb{A}}\left(\begin{bmatrix}
0 & X \\
0 & 0
\end{bmatrix}\right) = \frac{1}{2}{\|X\|}_{A}$.
\end{proof}
\begin{corollary}\label{C.2.07}
Let $T, S, X, Y\in\mathbb{B}_{A^{1/2}}(\mathcal{H})$. Then
\begin{itemize}
\item[(i)] $w_{\mathbb{A}}\left(\begin{bmatrix}
T & X \\
0 & 0
\end{bmatrix}\right) \leq w_{A}(T) +\frac{1}{2}{\|X\|}_{A}$.

\item[(ii)] $w_{\mathbb{A}}\left(\begin{bmatrix}
0 & 0 \\
Y & S
\end{bmatrix}\right)\leq w_{A}(S) +\frac{1}{2}{\|Y\|}_{A}$.
\end{itemize}
\end{corollary}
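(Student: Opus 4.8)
The plan is to prove both parts by the same elementary decomposition strategy, exploiting the fact that $w_{\mathbb{A}}(\cdot)$ is a seminorm on $\mathbb{B}_{\mathbb{A}^{1/2}}(\mathcal{H}\oplus\mathcal{H})$ (so in particular it is subadditive), together with the two earlier evaluations already available to us: Lemma \ref{L.2.01}(iii) for diagonal block matrices and Corollary \ref{C.2.06} for the single-entry off-diagonal blocks. Since $T, S, X, Y \in \mathbb{B}_{A^{1/2}}(\mathcal{H})$, all the relevant block matrices lie in $\mathbb{B}_{\mathbb{A}^{1/2}}(\mathcal{H}\oplus\mathcal{H})$, so the seminorm machinery applies.

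For part (i), first I would split the operator matrix into its diagonal and off-diagonal pieces, writing
\[
\begin{bmatrix} T & X \\ 0 & 0 \end{bmatrix}
= \begin{bmatrix} T & 0 \\ 0 & 0 \end{bmatrix}
+ \begin{bmatrix} 0 & X \\ 0 & 0 \end{bmatrix}.
\]
Applying subadditivity of $w_{\mathbb{A}}(\cdot)$ gives an upper bound by the sum of the two individual $\mathbb{A}$-numerical radii. The diagonal term is handled by Lemma \ref{L.2.01}(iii), which yields $\max\{w_A(T), w_A(0)\} = w_A(T)$, and the off-diagonal term is evaluated exactly as $\tfrac{1}{2}\|X\|_A$ by Corollary \ref{C.2.06}. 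Combining these produces the claimed bound $w_A(T) + \tfrac{1}{2}\|X\|_A$.

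Part (ii) is entirely symmetric: I would decompose
\[
\begin{bmatrix} 0 & 0 \\ Y & S \end{bmatrix}
= \begin{bmatrix} 0 & 0 \\ 0 & S \end{bmatrix}
+ \begin{bmatrix} 0 & 0 \\ Y & 0 \end{bmatrix},
\]
invoke subadditivity, then use Lemma \ref{L.2.01}(iii) to get $w_A(S)$ from the diagonal block and Corollary \ref{C.2.06} to get $\tfrac{1}{2}\|Y\|_A$ from the lower-left block. There is no genuine obstacle here: the argument is a direct triangle-inequality estimate, and the only point worth flagging is that one must first note that $w_{\mathbb{A}}(\cdot)$ is a seminorm on $\mathbb{B}_{\mathbb{A}^{1/2}}(\mathcal{H}\oplus\mathcal{H})$ — which is exactly the $\mathbb{A}$-analogue of the seminorm property recorded for $w_A(\cdot)$ in the introduction — so that subadditivity is legitimate on these block operators.
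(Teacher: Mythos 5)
Your proposal is correct and is essentially identical to the paper's own proof: the authors use the same decomposition into the diagonal block plus the single-entry off-diagonal block, apply the triangle inequality for $w_{\mathbb{A}}(\cdot)$, and then invoke Lemma \ref{L.2.01}(iii) and Corollary \ref{C.2.06}, with part (ii) handled by the symmetric argument. No gaps to report.
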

\begin{proof}
By the triangle inequality for $w_{\mathbb{A}}(\cdot)$, Lemma \ref{L.2.01} (iii) and Corollary \ref{C.2.06} it follows that
\begingroup\makeatletter\def\f@size{10}\check@mathfonts
\begin{align*}
w_{\mathbb{A}}\left(\begin{bmatrix}
T & X \\
0 & 0
\end{bmatrix}\right)& = w_{\mathbb{A}}\left(\begin{bmatrix}
T & 0 \\
0 & 0
\end{bmatrix}
+ \begin{bmatrix}
0 & X \\
0 & 0
\end{bmatrix}\right)
\\& \leq w_{\mathbb{A}}\left(\begin{bmatrix}
T & 0 \\
0 & 0
\end{bmatrix}\right) +
w_{\mathbb{A}}\left(\begin{bmatrix}
0 & X \\
0 & 0
\end{bmatrix}\right) = w_{A}(T) +\frac{1}{2}{\|X\|}_{A}.
\end{align*}
\endgroup
Hence $w_{\mathbb{A}}\left(\begin{bmatrix}
T & X \\
0 & 0
\end{bmatrix}\right) \leq w_{A}(T) +\frac{1}{2}{\|X\|}_{A}$.

By a similar argument, we get the inequality (ii).
\end{proof}
\begin{corollary}\label{C.2.08}
Let $T, S\in\mathbb{B}_{A^{1/2}}(\mathcal{H})$. Then
\begin{itemize}
\item[(i)] $w_{\mathbb{A}}\left(\begin{bmatrix}
T & S \\
T & S
\end{bmatrix}\right) \leq w_{A}(T-S) + \frac{1}{2}{\|T+S\|}_{A}$.

\item[(ii)] $w_{\mathbb{A}}\left(\begin{bmatrix}
T & S \\
-T & -S
\end{bmatrix}\right) \leq w_{A}(T+S) + \frac{1}{2}{\|T-S\|}_{A}$.
\end{itemize}
\end{corollary}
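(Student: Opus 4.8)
The plan is to obtain both inequalities from Corollary \ref{C.2.07} through a single $\mathbb{A}$-unitary similarity, exploiting the weak $\mathbb{A}$-unitary invariance (\ref{I.1.03}) of $w_{\mathbb{A}}(\cdot)$. The device that drives the whole argument is the block operator
\[
\mathbb{U} = \frac{1}{\sqrt{2}}\begin{bmatrix} I & I \\ I & -I \end{bmatrix},
\]
which I first claim is $\mathbb{A}$-unitary. To see this, write $P := P_{\overline{\mathcal{R}(A)}}$ and recall that $I^{\sharp_A} = A^{\dag}A = P$. By Lemma \ref{L.2.01}(i) we then have $\mathbb{U}^{\sharp_{\mathbb{A}}} = \frac{1}{\sqrt{2}}\begin{bmatrix} P & P \\ P & -P \end{bmatrix}$, and a direct $2\times2$ block multiplication gives $\mathbb{U}^{\sharp_{\mathbb{A}}}\mathbb{U} = \begin{bmatrix} P & 0 \\ 0 & P \end{bmatrix} = P_{\overline{\mathcal{R}(\mathbb{A})}}$; since $P^{\sharp_A} = A^{\dag}PA = A^{\dag}A = P$, the same computation yields $(\mathbb{U}^{\sharp_{\mathbb{A}}})^{\sharp_{\mathbb{A}}}\mathbb{U}^{\sharp_{\mathbb{A}}} = P_{\overline{\mathcal{R}(\mathbb{A})}}$ as well, so $\mathbb{U}$ is $\mathbb{A}$-unitary.

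For part (i) I apply the invariance (\ref{I.1.03}) to $M = \begin{bmatrix} T & S \\ T & S \end{bmatrix}$, so that $w_{\mathbb{A}}(M) = w_{\mathbb{A}}(\mathbb{U}^{\sharp_{\mathbb{A}}} M \mathbb{U})$. A short computation collapses the similarity to an upper-triangular matrix, $\mathbb{U}^{\sharp_{\mathbb{A}}} M \mathbb{U} = \begin{bmatrix} P(T+S) & P(T-S) \\ 0 & 0 \end{bmatrix}$, to which Corollary \ref{C.2.07}(i) applies. For part (ii) the \emph{same} $\mathbb{U}$ sends $\begin{bmatrix} T & S \\ -T & -S \end{bmatrix}$ to the lower-triangular matrix $\begin{bmatrix} 0 & 0 \\ P(T+S) & P(T-S) \end{bmatrix}$, and now Corollary \ref{C.2.07}(ii) applies. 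In each case Corollary \ref{C.2.07} returns $w_A$ of the surviving diagonal entry plus one half the $A$-seminorm of the surviving off-diagonal entry.

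It remains to discard the projection $P$ from these entries. Since $Ax \in \overline{\mathcal{R}(A)}$ for every $x$, one checks $AP = PA = A$, whence for any $Z \in \mathbb{B}_{A^{1/2}}(\mathcal{H})$ and all $x$ we get $\langle PZx, x\rangle_A = \langle AZx, x\rangle = \langle Zx, x\rangle_A$ and $\|PZx\|_A^2 = \langle APZx, Zx\rangle = \|Zx\|_A^2$. Consequently $w_A(PZ) = w_A(Z)$ and $\|PZ\|_A = \|Z\|_A$. Applying this with $Z = T+S$ and $Z = T-S$ removes every occurrence of $P$ and leaves precisely the combinations of $w_A(T \pm S)$ and $\frac{1}{2}\|T \mp S\|_A$ appearing on the two right-hand sides.

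The step I expect to be the main obstacle is the verification that $\mathbb{U}$ is genuinely $\mathbb{A}$-unitary together with the bookkeeping of $P = P_{\overline{\mathcal{R}(A)}}$ afterwards: because $I^{\sharp_A} = P \neq I$ whenever $A$ fails to be injective, every block product above carries $P$-twisted entries, and only the identities $AP = PA = A$ let one recover the clean $w_A$ and $\|\cdot\|_A$ terms at the end. For completeness I would also record the purely additive alternative, splitting $M = \begin{bmatrix} V & V \\ V & V \end{bmatrix} + \begin{bmatrix} W & -W \\ W & -W \end{bmatrix}$ with $V = \frac{T+S}{2}$ and $W = \frac{T-S}{2}$: the triangle inequality for $w_{\mathbb{A}}(\cdot)$ together with Lemma \ref{L.2.01}(iv) handles the first summand cleanly (it contributes $w_A(T+S)$), but the remainder $\begin{bmatrix} W & -W \\ W & -W \end{bmatrix}$ is not one of the matrices tabulated in Lemma \ref{L.2.01}, so its $\mathbb{A}$-numerical radius is awkward to bound directly. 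This is why I would rely on the similarity route rather than on a direct decomposition.
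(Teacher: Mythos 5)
Your route is the same as the paper's: conjugate by an $\mathbb{A}$-unitary block matrix to annihilate one row, then invoke Corollary \ref{C.2.07}. Your verification that $\mathbb{U}$ is $\mathbb{A}$-unitary and your bookkeeping of $P=P_{\overline{\mathcal{R}(A)}}$ (via $AP=PA=A$, hence $w_A(PZ)=w_A(Z)$ and $\|PZ\|_A=\|Z\|_A$) are correct, and indeed more careful than the paper, which silently drops $P$. The gap is in your last step, where you assert that the bounds you obtain are ``precisely the combinations \dots appearing on the two right-hand sides.'' They are not. For (i) your similarity yields the matrix with first row $\big(P(T+S),\,P(T-S)\big)$, so Corollary \ref{C.2.07}(i) returns $w_A(T+S)+\tfrac12\|T-S\|_A$, whereas the printed right-hand side of (i) is $w_A(T-S)+\tfrac12\|T+S\|_A$; for (ii) you likewise obtain $w_A(T-S)+\tfrac12\|T+S\|_A$ against the printed $w_A(T+S)+\tfrac12\|T-S\|_A$. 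These pairs of quantities are not comparable in general, so the mismatch cannot be absorbed.

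In fact no argument can close this gap, because the corollary as printed is false. Take $T=S=I$ with $A\neq 0$: by Lemma \ref{L.2.01}(iv) the left-hand side of (i) equals $w_A(2I)=2$, while the printed right-hand side equals $w_A(0)+\tfrac12\|2I\|_A=1$; similarly $T=I$, $S=-I$ refutes (ii). What you have actually proved is the corrected statement with the two right-hand sides interchanged, namely (i) $\leq w_A(T+S)+\tfrac12\|T-S\|_A$ and (ii) $\leq w_A(T-S)+\tfrac12\|T+S\|_A$, and the examples above show these are attained. For what it is worth, the paper's own proof has exactly the same defect: it correctly derives the bound $w_A(T+S)+\tfrac12\|T-S\|_A$ for the matrix in (i) and then concludes ``and hence'' the printed inequality, which is a non sequitur. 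So your computations are sound; the statement itself is misprinted, and your argument establishes the swapped version.
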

\begin{proof}
(i) Let $U = \frac{1}{\sqrt{2}}\begin{bmatrix}
I & I \\
-I & I
\end{bmatrix}$. Then $U$ is $\mathbb{A}$-unitary.
Consequently, by the identity (\ref{I.1.03}) and Corollary \ref{C.2.07}, we have
\begingroup\makeatletter\def\f@size{10}\check@mathfonts
\begin{align*}
w_{\mathbb{A}}\left(\begin{bmatrix}
T & S \\
T & S
\end{bmatrix}\right) &=
w_{\mathbb{A}}\left(U^{\sharp_A}\begin{bmatrix}
T & S \\
T & S
\end{bmatrix}U\right)
\\& = w_{\mathbb{A}}\left(\begin{bmatrix}
0 & 0 \\
T-S & T+S
\end{bmatrix}\right)
\leq w_{A}(T+S) + \frac{1}{2}{\|T-S\|}_{A},
\end{align*}
\endgroup
and hence
$w_{\mathbb{A}}\left(\begin{bmatrix}
T & S \\
T & S
\end{bmatrix}\right) \leq w_{A}(T-S) + \frac{1}{2}{\|T+S\|}_{A}$.

(ii) The proof is similar to (i) and so we omit it.
\end{proof}
\begin{corollary}\label{C.2.09}
Let $X\in\mathbb{B}_{A^{1/2}}(\mathcal{H})$. Then
\begin{itemize}
\item[(i)] $w_{\mathbb{A}}\left(\begin{bmatrix}
0 & X \\
X^{\sharp_A} & 0
\end{bmatrix}\right) = {\|X\|}_{A}$.
\item[(ii)] $\max\Big\{{\big\|X^{\sharp_A}X + X^{\sharp_A}(X^{\sharp_A})^{\sharp_A}\big\|}_{A},
{\big\|XX^{\sharp_A} + (X^{\sharp_A})^{\sharp_A}X^{\sharp_A}\big\|}_{A}\Big\} = 2{\|X\|}^2_{A}$.
\end{itemize}
\end{corollary}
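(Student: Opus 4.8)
The plan is to prove both identities in Corollary \ref{C.2.09} by combining the theorem with the lower bounds already available. I begin with part (i). Setting $Y = X^{\sharp_A}$ in Theorem \ref{T.2.03}, I would compute the two products appearing inside the maximum. Using the relation $(X^{\sharp_A})^{\sharp_A}X^{\sharp_A} = P_{\overline{\mathcal{R}(A)}}XP_{\overline{\mathcal{R}(A)}}$ together with the identities collected in the introduction, the key observation is that $\|X^{\sharp_A}X\|_A = \|X\|_A^2$ by (\ref{I.1.01}), and similarly the other summands should each reduce to $\|X\|_A^2$ in $A$-seminorm. One also needs $XY = XX^{\sharp_A}$ and $YX = X^{\sharp_A}X$, both of which are $A$-selfadjoint, so that $w_A(XY) = \|XX^{\sharp_A}\|_A = \|X\|_A^2$ and likewise $w_A(YX) = \|X^{\sharp_A}X\|_A = \|X\|_A^2$. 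Feeding these into the theorem yields
\begin{align*}
w^2_{\mathbb{A}}\left(\begin{bmatrix}
0 & X \\
X^{\sharp_A} & 0
\end{bmatrix}\right) \leq \tfrac{1}{4}\cdot 2\|X\|_A^2 + \tfrac{1}{2}\|X\|_A^2 = \|X\|_A^2,
\end{align*}
giving the upper bound $w_{\mathbb{A}}(\cdot) \leq \|X\|_A$.

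For the matching lower bound I would invoke (\ref{I.1.02}) and Lemma \ref{L.2.01}(ii): since $w_{\mathbb{A}}(M) \geq \tfrac{1}{2}\|M\|_{\mathbb{A}}$ and, by Lemma \ref{L.2.01}(ii),
\begin{align*}
{\left\|\begin{bmatrix}
0 & X \\
X^{\sharp_A} & 0
\end{bmatrix}\right\|}_{\mathbb{A}} = \max\big\{\|X\|_A, \|X^{\sharp_A}\|_A\big\} = \|X\|_A
\end{align*}
using $\|X^{\sharp_A}\|_A = \|X\|_A$ from (\ref{I.1.01}), this only gives $w_{\mathbb{A}} \geq \tfrac{1}{2}\|X\|_A$, which is too weak. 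So the sharper lower bound must come from (\ref{I.2.R.2.04}) in Remark \ref{R.2.04}: with $Y = X^{\sharp_A}$, the products $XY$ and $YX$ are $A$-selfadjoint, hence $w_A(XX^{\sharp_A}) = \|X\|_A^2$ and $w_A(X^{\sharp_A}X) = \|X\|_A^2$, so $\max\{w_A^{1/2}(XY), w_A^{1/2}(YX)\} = \|X\|_A \leq w_{\mathbb{A}}(M)$. Combining the two bounds establishes the equality in (i).

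Part (ii) should fall out immediately once (i) is in hand. Applying the upper bound of Theorem \ref{T.2.03} (again with $Y = X^{\sharp_A}$) and the equality just proved in (i), the estimate $\|X\|_A^2 = w^2_{\mathbb{A}}(M) \leq \tfrac14 \max\{\cdots\} + \tfrac12 \|X\|_A^2$ forces $\max\{\cdots\} \geq 2\|X\|_A^2$, while the reverse inequality $\max\{\cdots\} \leq 2\|X\|_A^2$ follows from the triangle inequality for $\|\cdot\|_A$ and the norm identities (\ref{I.1.01}), since each of $\|X^{\sharp_A}X\|_A$, $\|X^{\sharp_A}(X^{\sharp_A})^{\sharp_A}\|_A$, $\|XX^{\sharp_A}\|_A$, $\|(X^{\sharp_A})^{\sharp_A}X^{\sharp_A}\|_A$ equals $\|X\|_A^2$. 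The main obstacle I anticipate is the bookkeeping in part (ii): I must verify carefully that $\|(X^{\sharp_A})^{\sharp_A}X^{\sharp_A}\|_A = \|X^{\sharp_A}\|_A^2 = \|X\|_A^2$ and handle the projection $P_{\overline{\mathcal{R}(A)}}$ correctly, since $(X^{\sharp_A})^{\sharp_A}$ is $P_{\overline{\mathcal{R}(A)}}XP_{\overline{\mathcal{R}(A)}}$ rather than $X$ itself; the $A$-seminorm should be insensitive to these projections, but this needs to be checked rather than assumed.
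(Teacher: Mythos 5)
Your proposal is correct and takes essentially the same route as the paper, which runs the single chain ${\|X\|}_A^2 = w_{\mathbb{A}}(M^2) \le w^2_{\mathbb{A}}(M) \le \frac14\max\{\cdots\}+\frac12{\|X\|}_A^2 \le {\|X\|}_A^2$ (lower bound from the power inequality, i.e.\ exactly your appeal to (\ref{I.2.R.2.04}); upper bound on the max term from (\ref{I.1.01}) plus the triangle inequality, as you do) and reads off both (i) and (ii) from the forced equalities. The only slip is your displayed relation $(X^{\sharp_A})^{\sharp_A}X^{\sharp_A}=P_{\overline{\mathcal{R}(A)}}XP_{\overline{\mathcal{R}(A)}}$, which should be $(X^{\sharp_A})^{\sharp_A}=P_{\overline{\mathcal{R}(A)}}XP_{\overline{\mathcal{R}(A)}}$, but it is never actually used since your norm computations correctly go through (\ref{I.1.01}) applied to $X^{\sharp_A}$.
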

\begin{proof}
First, note that $XX^{\sharp_A}$ and $X^{\sharp_A}X$ are $A$-selfadjoint, and then by (\ref{I.1.01}) we have
\begingroup\makeatletter\def\f@size{10}\check@mathfonts
\begin{align}\label{I.1.C.2.09}
w_{A}(XX^{\sharp_A}) = {\|XX^{\sharp_A}\|}_{A} = {\|X\|}^2_{A} = {\|X^{\sharp_A}X\|}_{A} = w_{A}(X^{\sharp_A}X).
\end{align}
\endgroup
Also, since
${\begin{bmatrix}
0 & X \\
X^{\sharp_A} & 0
\end{bmatrix}}^2 = \begin{bmatrix}
XX^{\sharp_A} & 0 \\
0 & X^{\sharp_A}X
\end{bmatrix}$, by Lemma \ref{L.2.01} (iii) and (\ref{I.1.C.2.09}) we obtain

\begingroup\makeatletter\def\f@size{10}\check@mathfonts
\begin{align*}
w_{\mathbb{A}}\left({\begin{bmatrix}
0 & X \\
X^{\sharp_A} & 0
\end{bmatrix}}^2\right) = \max\big\{w_{A}(XX^{\sharp_A}), w_{A}(X^{\sharp_A}X)\big\} = {\|X\|}^2_{A}.
\end{align*}
\endgroup
Further, by the power inequality (\ref{I.1.04}), Theorem \ref{T.2.03}, (\ref{I.1.01}) and (\ref{I.1.02})
we have
\begingroup\makeatletter\def\f@size{10}\check@mathfonts
\begin{align*}
{\|X\|}^2_{A} &= w_{\mathbb{A}}\left({\begin{bmatrix}
0 & X \\
X^{\sharp_A} & 0
\end{bmatrix}}^2\right)
\\& \leq
w^2_{\mathbb{A}}\left(\begin{bmatrix}
0 & X \\
X^{\sharp_A} & 0
\end{bmatrix}\right)
\\& \leq \frac{1}{4}\max\Big\{{\big\|X^{\sharp_A}X + X^{\sharp_A}(X^{\sharp_A})^{\sharp_A}\big\|}_{A}, {\big\|XX^{\sharp_A} + (X^{\sharp_A})^{\sharp_A}X^{\sharp_A}\big\|}_{A}\Big\}
\\& \qquad \qquad \qquad \qquad + \frac{1}{2}\max\big\{w_{A}(XX^{\sharp_A}), w_{A}(X^{\sharp_A}X)\big\}
\\& \leq \frac{1}{2}{\|X\|}^2_{A} + \frac{1}{2}{\|X\|}^2_{A} = {\|X\|}^2_{A}.
\end{align*}
\endgroup
Therefore, $w^2_{\mathbb{A}}\left(\begin{bmatrix}
0 & X \\
X^{\sharp_A} & 0
\end{bmatrix}\right) = {\|X\|}^2_{A}$ and
\begingroup\makeatletter\def\f@size{9}\check@mathfonts
\begin{align*}
\frac{1}{4}\max\Big\{{\big\|X^{\sharp_A}X + X^{\sharp_A}(X^{\sharp_A})^{\sharp_A}\big\|}_{A}, {\big\|XX^{\sharp_A} + (X^{\sharp_A})^{\sharp_A}X^{\sharp_A}\big\|}_{A}\Big\} = \frac{1}{2}{\|X\|}^2_{A}.
\end{align*}
\endgroup
Now, we deduce the desired results.
\end{proof}
In the following theorem, we establish an upper bound for the $\mathbb{A}$-numerical radius of
$2\times 2$ block matrices of semi-Hilbertian space operators.
\begin{theorem}\label{T.2.010}
Let $T, S, X, Y\in\mathbb{B}_{A^{1/2}}(\mathcal{H})$. Then

\begingroup\makeatletter\def\f@size{9}\check@mathfonts
\begin{align*}
w^2_{\mathbb{A}}\left(\begin{bmatrix}
T & X \\
Y & S
\end{bmatrix}\right) &\leq w^2_{\mathbb{A}}\left(\begin{bmatrix}
0 & X \\
Y & 0
\end{bmatrix}\right) + w_{\mathbb{A}}\left(\begin{bmatrix}
0 & XS \\
YT & 0
\end{bmatrix}\right)
\\& \qquad + \max\big\{w^2_{A}(T), w^2_{A}(S)\big\}
+ \frac{1}{2}\max\Big\{{\big\|T^{\sharp_A}T + XX^{\sharp_A}\big\|}_{A}, {\big\|S^{\sharp_A}S + YY^{\sharp_A}\big\|}_{A}\Big\}.
\end{align*}
\endgroup
\end{theorem}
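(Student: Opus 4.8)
The plan is to split the full matrix into its diagonal and off-diagonal parts and to expand the squared modulus of the $\mathbb{A}$-numerical form of the sum. Writing $D = \begin{bmatrix} T & 0 \\ 0 & S \end{bmatrix}$ and $R = \begin{bmatrix} 0 & X \\ Y & 0 \end{bmatrix}$, so that $\begin{bmatrix} T & X \\ Y & S \end{bmatrix} = D + R$, I would fix $z \in \mathcal{H}\oplus\mathcal{H}$ with ${\|z\|}_{\mathbb{A}} = 1$ and expand
$$
\big|{\langle (D+R)z, z\rangle}_{\mathbb{A}}\big|^2 = \big|{\langle Dz, z\rangle}_{\mathbb{A}}\big|^2 + \big|{\langle Rz, z\rangle}_{\mathbb{A}}\big|^2 + 2\,\mathrm{Re}\Big({\langle Dz, z\rangle}_{\mathbb{A}}\,\overline{{\langle Rz, z\rangle}_{\mathbb{A}}}\Big).
$$
The two self-terms are controlled immediately by $w_{\mathbb{A}}^2(D)$ and $w_{\mathbb{A}}^2(R)$; by Lemma \ref{L.2.01}(iii) the former equals $\max\{w_A^2(T), w_A^2(S)\}$, and the latter is exactly the off-diagonal term on the right-hand side. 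All the work is in the cross term.

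For the cross term I would first pass to absolute values, $2\,\mathrm{Re}(\cdots) \le 2\big|{\langle Dz, z\rangle}_{\mathbb{A}}\big|\,\big|{\langle Rz, z\rangle}_{\mathbb{A}}\big|$, which sidesteps the conjugation. The crucial move is then to rewrite ${\langle Rz, z\rangle}_{\mathbb{A}} = {\langle z, R^{\sharp_{\mathbb{A}}}z\rangle}_{\mathbb{A}}$ by the defining property of the $\mathbb{A}$-adjoint, so that the product takes the form $\big|{\langle Dz, z\rangle}_{\mathbb{A}}\,{\langle z, R^{\sharp_{\mathbb{A}}}z\rangle}_{\mathbb{A}}\big|$, to which Lemma \ref{L.2.02} applies with the pairing $(Dz,\, R^{\sharp_{\mathbb{A}}}z)$. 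This produces ${\|Dz\|}_{\mathbb{A}}{\|R^{\sharp_{\mathbb{A}}}z\|}_{\mathbb{A}} + \big|{\langle Dz, R^{\sharp_{\mathbb{A}}}z\rangle}_{\mathbb{A}}\big|$; invoking the defining property of $\sharp_{\mathbb{A}}$ once more gives ${\langle Dz, R^{\sharp_{\mathbb{A}}}z\rangle}_{\mathbb{A}} = {\langle RDz, z\rangle}_{\mathbb{A}}$, so the second summand is at most $w_{\mathbb{A}}(RD)$, with $RD = \begin{bmatrix} 0 & XS \\ YT & 0 \end{bmatrix}$. For the first summand I would apply the arithmetic–geometric mean inequality and use ${\|Dz\|}_{\mathbb{A}}^2 = {\langle D^{\sharp_{\mathbb{A}}}Dz, z\rangle}_{\mathbb{A}}$ and ${\|R^{\sharp_{\mathbb{A}}}z\|}_{\mathbb{A}}^2 = {\langle RR^{\sharp_{\mathbb{A}}}z, z\rangle}_{\mathbb{A}}$, whose sum is ${\langle (D^{\sharp_{\mathbb{A}}}D + RR^{\sharp_{\mathbb{A}}})z, z\rangle}_{\mathbb{A}}$; since $D^{\sharp_{\mathbb{A}}}D + RR^{\sharp_{\mathbb{A}}}$ is $\mathbb{A}$-positive this is bounded by ${\|D^{\sharp_{\mathbb{A}}}D + RR^{\sharp_{\mathbb{A}}}\|}_{\mathbb{A}}$.

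Collecting the three contributions, taking the supremum over unit vectors $z$, and translating back to blocks via Lemma \ref{L.2.01} — using $D^{\sharp_{\mathbb{A}}}D + RR^{\sharp_{\mathbb{A}}} = \mathrm{diag}\big(T^{\sharp_A}T + XX^{\sharp_A},\, S^{\sharp_A}S + YY^{\sharp_A}\big)$ together with part (ii) for its $\mathbb{A}$-norm — yields exactly the claimed bound. I expect the cross term to be the only delicate point: a naive symmetric application of Lemma \ref{L.2.02} to the pair $(Rz, Dz)$ would instead produce $R^{\sharp_{\mathbb{A}}}R$ and $D^{\sharp_{\mathbb{A}}}R$, giving the wrong blocks ${\|T^{\sharp_A}T + Y^{\sharp_A}Y\|}_A$ and $w_{\mathbb{A}}(D^{\sharp_{\mathbb{A}}}R)$. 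Replacing ${\langle Rz, z\rangle}_{\mathbb{A}}$ by ${\langle z, R^{\sharp_{\mathbb{A}}}z\rangle}_{\mathbb{A}}$ \emph{before} invoking Lemma \ref{L.2.02} is precisely what steers the estimate toward $RR^{\sharp_{\mathbb{A}}}$ and $RD$, matching the target.
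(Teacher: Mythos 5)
Your proposal is correct and follows essentially the same route as the paper's proof: the same splitting into diagonal plus off-diagonal parts, the same application of Lemma \ref{L.2.02} to the pair $\big(Dz,\,R^{\sharp_{\mathbb{A}}}z\big)$ (the paper's $Pz$ and $M^{\sharp_{\mathbb{A}}}z$), the same arithmetic--geometric mean step yielding $\tfrac12\|D^{\sharp_{\mathbb{A}}}D+RR^{\sharp_{\mathbb{A}}}\|_{\mathbb{A}}$, and the same translation back to blocks via Lemma \ref{L.2.01}. Your closing observation about which pairing to feed into Lemma \ref{L.2.02} correctly identifies the one genuinely delicate choice in the argument.
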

\begin{proof}
We will assume that
$M=\begin{bmatrix}
0 & X \\
Y & 0
\end{bmatrix}$, $N=\begin{bmatrix}
XX^{\sharp_{A}} & 0 \\
0 & YY^{\sharp_{A}}
\end{bmatrix}$, $P=\begin{bmatrix}
T & 0 \\
0 & S
\end{bmatrix}$, $Q=\begin{bmatrix}
T^{\sharp_{A}}T & 0 \\
0 & S^{\sharp_{A}}S
\end{bmatrix}$ and $R=\begin{bmatrix}
0 & XS \\
YT & 0
\end{bmatrix}$, to simplify notations.

Therefore,
\begin{align}\label{I.1.T.2.010}
MP = R \quad \mbox{and} \quad Q+N = \begin{bmatrix}
T^{\sharp_{A}}T + XX^{\sharp_{A}} & 0 \\
0 & S^{\sharp_{A}}S + YY^{\sharp_{A}}
\end{bmatrix}.
\end{align}
Also, by Lemma \ref{L.2.01}(i), we obtain
\begin{align}\label{I.2.T.2.010}
MM^{\sharp_{\mathbb{A}}} = N \quad \mbox{and} \quad P^{\sharp_{\mathbb{A}}}P = Q.
\end{align}

Now, let $z\in \mathcal{H}\oplus\mathcal{H}$ with ${\|z\|}_{\mathbb{A}} = 1$.
Since $\begin{bmatrix}
T & X \\
Y & S
\end{bmatrix} = P+M$, we have

\begingroup\makeatletter\def\f@size{9}\check@mathfonts
\begin{align*}
\left|{\left\langle \begin{bmatrix}
T & X \\
Y & S
\end{bmatrix}z, z\right\rangle}_{\mathbb{A}}\right|^2
&= \Big|{\big\langle (P+M)z, z\big\rangle}_{\mathbb{A}}\Big|^2
\\& = \Big|{\langle Pz, z\rangle}_{\mathbb{A}}
+ {\langle Mz, z\rangle}_{\mathbb{A}}\Big|^2
\\& \leq \Big(\big|{\langle Pz, z\rangle}_{\mathbb{A}}\big|
+\big|{\langle Mz, z\rangle}_{\mathbb{A}}\big|\Big)^2
\\& = \big|{\langle Pz, z\rangle}_{\mathbb{A}}\big|^2
+ \big|{\langle Mz, z\rangle}_{\mathbb{A}}\big|^2 + 2\big|{\langle Pz, z\rangle}_{\mathbb{A}}\big|
\big|{\langle Mz, z\rangle}_{\mathbb{A}}\big|
\\& \leq w^2_{\mathbb{A}}(P)+
w^2_{\mathbb{A}}(M)
+ 2\big|{\langle Pz, z\rangle}_{\mathbb{A}}\big|
\big|{\langle z, M^{\sharp_{\mathbb{A}}}z\rangle}_{\mathbb{A}}\big|
\\& \leq w^2_{\mathbb{A}}(P)+ w^2_{\mathbb{A}}(M)
+ \big|{\langle Pz,M^{\sharp_{\mathbb{A}}}z\rangle}_{\mathbb{A}}\big|
+ {\|Pz\|}_{\mathbb{A}}{\|M^{\sharp_{\mathbb{A}}}z\|}_{\mathbb{A}}
\quad \big(\mbox{by Lemma \ref{L.2.02}}\big)
\\& = w^2_{\mathbb{A}}(P)+ w^2_{\mathbb{A}}(M)
+ \big|{\langle MPz, z\rangle}_{\mathbb{A}}\big|
+ \sqrt{{\langle Pz, Pz\rangle}_{\mathbb{A}}
{\langle M^{\sharp_{\mathbb{A}}}z, M^{\sharp_{\mathbb{A}}}z\rangle}_{\mathbb{A}}}
\\& = w^2_{\mathbb{A}}(P)+ w^2_{\mathbb{A}}(M)
+ \big|{\langle Rz, z\rangle}_{\mathbb{A}}\big|+\sqrt{{\langle P^{\sharp_{\mathbb{A}}}Pz, z\rangle}_{\mathbb{A}}
{\langle MM^{\sharp_{\mathbb{A}}}z, z\rangle}_{\mathbb{A}}}
\quad \big(\mbox{by (\ref{I.1.T.2.010})}\big)
\\& = w^2_{\mathbb{A}}(P)+ w^2_{\mathbb{A}}(M)
+ \big|{\langle Rz, z\rangle}_{\mathbb{A}}\big|+\sqrt{{\langle Qz, z\rangle}_{\mathbb{A}}
{\langle Nz, z\rangle}_{\mathbb{A}}}
\qquad \qquad \big(\mbox{by (\ref{I.2.T.2.010})}\big)
\\& \leq w^2_{\mathbb{A}}(P)+ w^2_{\mathbb{A}}(M)
+ w_{\mathbb{A}}(R) + \frac{1}{2}\Big({\langle Qz, z\rangle}_{\mathbb{A}} +
{\langle Nz, z\rangle}_{\mathbb{A}}\Big)
\\& \qquad \qquad \quad \quad \big(\mbox{by the arithmetic-geometric mean inequality}\big)
\\& = w^2_{\mathbb{A}}(P)+ w^2_{\mathbb{A}}(M)
+ w_{\mathbb{A}}(R) + \frac{1}{2}
{\big\langle(Q+N)z, z\big\rangle}_{\mathbb{A}}
\\& \leq w^2_{\mathbb{A}}(P)+ w^2_{\mathbb{A}}(M)
+ w_{\mathbb{A}}(R) + \frac{1}{2}{\|Q+N\|}_{\mathbb{A}}.
\\& \qquad \qquad \qquad \qquad \qquad \Big(\mbox{since $Q+N$ is an $\mathbb{A}$-positive operator}\Big)
\end{align*}
\endgroup
Thus
\begingroup\makeatletter\def\f@size{9}\check@mathfonts
\begin{align*}
w^2_{_{\mathbb{A}}}\left(\begin{bmatrix}
T & X \\
Y & S
\end{bmatrix}\right) &\leq w^2_{\mathbb{A}}(P)+ w^2_{\mathbb{A}}(M)
+ w_{\mathbb{A}}(R) + \frac{1}{2}{\|Q+N\|}_{\mathbb{A}}.
\end{align*}
\endgroup
Now, by (\ref{I.1.T.2.010}) and Lemma \ref{L.2.01}(ii)-(iii), we deduce the desired result.
\end{proof}
As an application of Theorem \ref{T.2.010}, we obtain the following result.
\begin{corollary}\label{C.2.011}
Let $T, X\in\mathbb{B}_{A^{1/2}}(\mathcal{H})$. Then

\begingroup\makeatletter\def\f@size{10}\check@mathfonts
\begin{align*}
\max\big\{w_{A}(T), w_{A}(X)\big\} &+ \frac{1}{2}\big|w_{A}(T+X) - w_{A}(T-X)\big|
\\&\leq \max\big\{w_{A}(T+X), w_{A}(T-X)\big\}
\\& \leq \sqrt{w^2_{A}(X) + w_{A}(XT) + w^2_{A}(T)
+ \frac{1}{2}{\big\|XX^{\sharp_A} + T^{\sharp_A}T\big\|}_{A}}.
\end{align*}
\endgroup
\end{corollary}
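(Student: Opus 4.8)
The plan is to read the right-hand inequality as a specialization of Theorem~\ref{T.2.010} and the left-hand inequality as an elementary consequence of Lemma~\ref{L.2.01}(iv) together with the fact that $w_A(\cdot)$ is a seminorm on $\mathbb{B}_{A^{1/2}}(\mathcal{H})$.

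First I would establish the upper bound. The key observation is that the quantity $\max\big\{w_A(T+X), w_A(T-X)\big\}$ is, by Lemma~\ref{L.2.01}(iv), exactly $w_{\mathbb{A}}\left(\begin{bmatrix} T & X \\ X & T \end{bmatrix}\right)$. I would therefore apply Theorem~\ref{T.2.010} with $S = T$ and $Y = X$. Under this symmetric choice the two entries in each $\max$ appearing on the right-hand side of Theorem~\ref{T.2.010} coincide, so that bound collapses to $w_{\mathbb{A}}^2\left(\begin{bmatrix} 0 & X \\ X & 0 \end{bmatrix}\right) + w_{\mathbb{A}}\left(\begin{bmatrix} 0 & XT \\ XT & 0 \end{bmatrix}\right) + w_A^2(T) + \frac{1}{2}{\big\|T^{\sharp_A}T + XX^{\sharp_A}\big\|}_A$. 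Invoking the particular case of Lemma~\ref{L.2.01}(iv), the first off-diagonal term equals $w_A^2(X)$ and the second equals $w_A(XT)$. Combining these and taking square roots yields the claimed upper bound.

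Next I would handle the lower bound, which reduces to a purely scalar manipulation. Writing $a = w_A(T+X)$ and $b = w_A(T-X)$, the identity $\max\{a,b\} = \tfrac{1}{2}(a+b) + \tfrac{1}{2}|a-b|$ turns the desired inequality into $\max\big\{w_A(T), w_A(X)\big\} \le \tfrac{1}{2}\big(w_A(T+X) + w_A(T-X)\big)$. Since $T, X \in \mathbb{B}_{A^{1/2}}(\mathcal{H})$, so are $T \pm X$, and $w_A(\cdot)$ is a seminorm there; hence the decompositions $T = \tfrac{1}{2}\big[(T+X)+(T-X)\big]$ and $X = \tfrac{1}{2}\big[(T+X)-(T-X)\big]$, together with the triangle inequality and homogeneity of $w_A(\cdot)$, give the required estimate for each of $w_A(T)$ and $w_A(X)$, and thus for their maximum.

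I do not expect a genuine obstacle here: the whole argument is a matter of selecting the correct symmetric block matrix so that Lemma~\ref{L.2.01}(iv) applies cleanly, after which both inequalities follow by bookkeeping. The only point requiring slight care is verifying that the symmetric specialization $S = T$, $Y = X$ makes every $\max\{\cdot,\cdot\}$ in Theorem~\ref{T.2.010} degenerate to a single term, so that no spurious extra quantities survive in the final bound.
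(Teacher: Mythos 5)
Your proposal is correct and follows essentially the same route as the paper: the upper bound comes from Theorem~\ref{T.2.010} with $S=T$, $Y=X$ combined with Lemma~\ref{L.2.01}(iv), and the lower bound from the identity $\max\{a,b\}=\tfrac{1}{2}(a+b)+\tfrac{1}{2}|a-b|$ together with the triangle inequality for $w_A(\cdot)$ applied to $T\pm X$. No gaps.
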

\begin{proof}
By letting $S = T$ and $Y = X$ in Theorem \ref{T.2.010}, and using Lemma \ref{L.2.01} (iv), we have
\begingroup\makeatletter\def\f@size{10}\check@mathfonts
\begin{align*}
w^2_{\mathbb{A}}\left(\begin{bmatrix}
T & X \\
X & T
\end{bmatrix}\right) &\leq w^2_{\mathbb{A}}\left(\begin{bmatrix}
0 & X \\
X & 0
\end{bmatrix}\right) + w_{\mathbb{A}}\left(\begin{bmatrix}
0 & XT \\
XT & 0
\end{bmatrix}\right)
\\& \qquad + \max\big\{w^2_{A}(T), w^2_{A}(T)\big\}
+ \frac{1}{2}\max\Big\{{\big\|T^{\sharp_A}T + XX^{\sharp_A}\big\|}_{A}, {\big\|T^{\sharp_A}T + XX^{\sharp_A}\big\|}_{A}\Big\}
\\& = w^2_{A}(X) + w_{A}(XT) + w^2_{A}(T)
+ \frac{1}{2}{\big\|T^{\sharp_A}T + XX^{\sharp_A}\big\|}_{A}.
\end{align*}
\endgroup
Hence
\begingroup\makeatletter\def\f@size{10}\check@mathfonts
\begin{align*}
w^2_{\mathbb{A}}\left(\begin{bmatrix}
T & X \\
X & T
\end{bmatrix}\right) \leq w^2_{A}(X) + w_{A}(XT) + w^2_{A}(T)
+ \frac{1}{2}{\big\|T^{\sharp_A}T + XX^{\sharp_A}\big\|}_{A}.
\end{align*}
\endgroup
Since by Lemma \ref{L.2.01} (iv) we have $w_{\mathbb{A}}\left(\begin{bmatrix}
T & X \\
X & T
\end{bmatrix}\right) = \max\big\{w_{A}(T+X), w_{A}(T-X)\big\}$, therefore by the above inequality we get
\begingroup\makeatletter\def\f@size{10}\check@mathfonts
\begin{align}\label{I.01.C.2.011}
\max\big\{w_{A}(T+X), w_{A}(T-X)\big\} \leq \sqrt{w^2_{A}(X) + w_{A}(XT) + w^2_{A}(T)
+ \frac{1}{2}{\big\|T^{\sharp_A}T + XX^{\sharp_A}\big\|}_{A}}.
\end{align}
\endgroup
On the other hand, by the triangle inequality for $w_{A}(\cdot)$, we have
\begingroup\makeatletter\def\f@size{10}\check@mathfonts
\begin{align*}
\max\big\{w_{A}(T+X), w_{A}(T-X)\big\} & = \frac{w_{A}(T+X) + w_{A}(T-X)}{2} + \frac{1}{2}\big|w_{A}(T+X) - w_{A}(T-X)\big|
\\& \geq \frac{w_{A}\big(T+X +T-X)}{2} + \frac{1}{2}\big|w_{A}(T+X) - w_{A}(T-X)\big|
\\& = w_{A}(T) + \frac{1}{2}\big|w_{A}(T+X) - w_{A}(T-X)\big|.
\end{align*}
\endgroup
Thus
\begingroup\makeatletter\def\f@size{10}\check@mathfonts
\begin{align}\label{I.02.C.2.011}
w_{A}(T) + \frac{1}{2}\big|w_{A}(T+X) - w_{A}(T-X)\big| \leq \max\big\{w_{A}(T+X), w_{A}(T-X)\big\}.
\end{align}
\endgroup
Similarly,
\begingroup\makeatletter\def\f@size{10}\check@mathfonts
\begin{align}\label{I.03.C.2.011}
w_{A}(X) + \frac{1}{2}\big|w_{A}(T+X) - w_{A}(T-X)\big| \leq \max\big\{w_{A}(T+X), w_{A}(T-X)\big\}.
\end{align}
\endgroup
So, by (\ref{I.02.C.2.011}) and (\ref{I.03.C.2.011}), we obtain
\begingroup\makeatletter\def\f@size{10}\check@mathfonts
\begin{align}\label{I.04.C.2.011}
\max\big\{w_{A}(T), w_{A}(X)\big\} + \frac{1}{2}\big|w_{A}(T+X) - w_{A}(T-X)\big|
\leq \max\big\{w_{A}(T+X), w_{A}(T-X)\big\}.
\end{align}
\endgroup
Finally, the result follows from the inequalities (\ref{I.01.C.2.011}) and (\ref{I.04.C.2.011}).
\end{proof}
\textbf{Acknowledgement.}
This work was supported a grant from the Science and Technology Commission of Shanghai Municipality (18590745200)
and the National Natural Science Foundation of China (11671261, 11971136).
\bibliographystyle{amsplain}

\end{document}